\newcommand{\be}{\begin{eqnarray}}
\newcommand{\ee}{\end{eqnarray}}
\newcommand{\ce}{\begin{eqnarray*}}
\newcommand{\de}{\end{eqnarray*}}
\newtheorem{thm}{Theorem}[section]
\newtheorem{lem}[thm]{Lemma}
\newtheorem{prp}[thm]{Proposition}
\theoremstyle{definition}
\newtheorem{defn}{Definition}[section]
\definecolor{wco}{rgb}{0.5,0.2,0.3}
\numberwithin{equation}{section}
\theoremstyle{remark}
\newtheorem{rem}{Remark}[section]
\newcommand*{\norm}[1]{\lVert#1\rVert}
\def\R{\mathbb{R}}
\def\<{\langle} \def\>{\rangle}  
\def\d{\text{\rm{d}}}   \def\D{\mathcal D}
 \def\beq{\begin{equation}}  \def\F{\mathcal F}
 \def\ee{\varepsilon}
\def\l{\lambda}
\def\L{\Lambda}
\def\A{\mathcal{A}}
\def\D{\Delta}
\def\V{\mathcal{V}}
\def\VV{\mathcal{V}^*}
\def\[{{\Big[}}
\def\]{{\Big]}}
\def\({{\Big(}}
\def\){{\Big)}}
\DeclareMathOperator*{\esssup}{ess\,sup}
\title{{\bf   Quasi-Linear (Stochastic) Partial Differential Equations with Time-Fractional Derivatives}
\footnote{Financial support by the DFG
through the CRC ``Taming uncertainty and profiting from randomness and
low regularity in analysis, stochastics and their applications'' is
acknowledged.  W.L. is  supported by NSFC (No. 11571147),  NSF of Jiangsu Province
 (No. BK20160004), the Qing Lan Project and
PAPD of Jiangsu Higher Education Institutions, J.L.S. is supported by project  I$\&$D: UID/MAT/04674/2013.}
}
\author{ {\bf  Wei Liu$^{a}$\footnote{Corresponding author: weiliu@jsnu.edu.cn}
, Michael R\"{o}ckner$^{b}$, Jos\'{e} Lu\'{i}s da Silva$^{c}$
}\\
 \footnotesize{  $a.$ School of Mathematical Sciences, Jiangsu Normal University, 221116 Xuzhou, China}\\
 \footnotesize{  $b.$  Fakult\"at f\"ur Mathematik, Universit\"at Bielefeld,
 D-33501 Bielefeld, Germany}\\
\footnotesize{$c.$ CIMA, University of Madeira, Campus da Penteada, 9020-105 Funchal, Portugal
}\\
}
\date{}
\begin{document}
\maketitle

\begin{abstract}
In this paper we develop a method to solve (stochastic) evolution equations on Gelfand triples with time-fractional derivative based on monotonicity techniques.
Applications include deterministic and stochastic quasi-linear partial differential equations with time-fractional derivatives,
including time-fractional (stochastic) porous media equations (including the case where the Laplace operator is also fractional)
and $p$-Laplace equations as special cases.

\end{abstract}
\noindent
 AMS Subject Classification:\ 60H15, 35K59, 45K05, 35K92\\
\noindent
 Keywords: fractional derivative; monotone; pseudo-monotone; porous media equation; $p$-Laplace equation.

\bigbreak

\section{Introduction}

Fractional Calculus has a long history and its origins can be traced back to the end of seventeenth century (cf.~\cite{R77}).
Although the first main steps of the theory date back to the first half of the nineteenth century, this subject became
very active only over the last few decades. One of the main reasons is that scientists  and engineers
have developed many new models that naturally involve fractional differential equations, which have been applied successfully, e.g.~in mechanics (theory of viscoelasticity and viscoplasticity)(cf.~\cite{M10}), bio-chemistry (modelling of polymers and proteins)(cf.~\cite{DE88, F80}), electrical
engineering (transmission of ultrasound waves)(cf.~\cite{Di10}), medicine (modelling of human
tissue under mechanical loads)(cf.~\cite{CRA08,Ma10}) etc. For more applications and references we refer to \cite{BDS,He11,MS12,MK04}.

A typical example of a time-fractional equation is the time-fractional heat equation $\partial_t^\beta u = \Delta u$ with $0 < \beta < 1$, where $\partial_t^\beta$
is the Caputo fractional
derivative first appeared in \cite{Ca67} and is defined for
 $0 < \beta < 1$ by
\begin{equation}\label{Caputo}
  \partial_t^\beta  f(t)=\frac{1}{\Gamma(1-\beta)} \frac{d}{dt} \int_0^t (t-s)^{-\beta}(f(s)-f(0)) ds,
\end{equation}
where $\Gamma$ is the Gamma function defined by $\Gamma(\lambda) :=\int_0^\infty t^{\lambda-1} e^{-t} dt$. Here one implicitly assumes that $f$ is such that the integral on the right-hand side is (weakly) differentiable in $t$.
For the precise domain of $f$ for which $\partial_t^\beta f$ is defined and which is convenient in our case, we refer to \eqref{eq:2.9'} below.

The classical heat equation $\partial_t u = \Delta u$ describes heat propagation in homogeneous medium.
The time-fractional diffusion equation $\partial_t^\beta u = \Delta u$ with $0 < \beta < 1$ has been widely used to model
 anomalous diffusions exhibiting subdiffusive behavior, e.g. due to particle sticking and trapping phenomena (cf.~\cite{MS12,SKW82}).
While in normal diffusions (described by the
heat equation or more general parabolic equations), the mean squared displacement
of a diffusive particle behaves like const$\cdot t$ for $t\rightarrow\infty$, the time-fractional diffusion equation
exhibits a behaviour like const$\cdot t^\beta$ for $t\rightarrow\infty$.
 This is the reason why time-fractional equations with  $0 < \beta < 1$  are
called subdiffusion equations in the literature, and for the case  $1 < \beta < 2$  are
called superdiffusion equations. In fact, there is a diverse number of
real world systems which demonstrate this type of phenomena. For example,
the above time-fractional equations and nonlinear variants of them are also widely used to model dynamical processes
in materials with memory, e.g.~the diffusion
of fluids in porous media with memory (see \cite{C99}). We refer to the survey article \cite{MK} and the monographs \cite{BDS,Mur,P99,P93,SKM,Zh14} for more references.

We also want to mention the interesting interplay between stochastic processes and time-fractional differential equations.
A celebrated result of Einstein established a mathematical link between random walks, the diffusion equation and Brownian motion.
The probability densities of the Brownian motion solve a diffusion equation (heat equation), and hence we refer to the Brownian motion as the stochastic solution to
the heat equation.
Similarly, the stochastic solutions of time-fractional diffusion equations are subordinated processes (e.g.~iterated Brownian motion).
The solutions to fractional diffusion equations are strictly related to stable densities.
Indeed, the stochastic solutions of time-fractional diffusions can be realized through ``time-change'' by inverse stable subordinators.
Just as Brownian motion is a scaling limit of simple random walks, the stochastic solutions to certain fractional diffusion equations are scaling limits of continuous time random walks, in which the $i.i.d.$ jumps are separated by $i.i.d.$ waiting times.
For this subject we refer to \cite{BMN09,MNV09,OB09} and the references therein.

In recent years, there has been also growing interest in stochastic time-fractional partial differential equations (see e.g.~\cite{CY97, SRY97}).
For example, the author in \cite{Zh10} considers stochastic Volterra equations with singular kernels in a 2-smooth Banach space.  The authors in \cite{CKK} study the $L^2$-theory for a class of semilinear SPDEs with time-fractional
 derivatives, which can be used to describe
random effects on transport of particles in media with thermal memory, or particles subject to sticking and
trapping.
In  \cite{MN15,MN16}, the authors consider a space-time fractional stochastic heat type equation to model phenomenon with random
effects with thermal memory, and they prove existence and uniqueness of mild solutions and also some intermittency property (see also \cite{FN15}).
For a linear stochastic partial differential equation of fractional order both in the time and space variables with a different type of noise term  we refer to \cite{CHHH,HH}.
In \cite{C14,CHN} the authors investigate linear stochastic time-fractional partial differential equations for the more general case $0<\beta \le 2$.

In this paper, we are mainly interested in non-linear stochastic time-fractional partial differential equations in a variational setting,
which include in particular important examples of quasi-linear type as the stochastic porous media or the $p$-Laplace equation.
They are of the following form:
\begin{equation}\label{SDE}
 \partial_t^{\beta}X(t)+A(t,X(t))=\partial_t^\gamma \int_0^t B(s) dW(s),  \ 0<t<T,  
\end{equation}
where $V \subseteq H\equiv H^*\subseteq V^*$ is a Gelfand triple, $\beta\in(0,1], \gamma\in(0,\beta+\frac{1}{2})\cap (0, 1]$, $\partial_t^{\beta}$ is the Caputo  fractional derivative, $A:[0,T]\times V\rightarrow V^*$ and $B: [0,T]\rightarrow L_{HS}(U; H)$ (here $L_{HS}(U; H)$
denotes the space of all Hilbert--Schmidt operators from $U$ to $H$) are measurable. For simplicity we only consider the additive type noise in (\ref{SDE}) here,
the case of general multiplicative noise will be investigated in a forthcoming paper.

We will establish the existence and uniqueness of solutions to (\ref{SDE}) under classical monotonicity and coercivity conditions on $A$ (see Theorems \ref{Th1} and \ref{Th2}).
The proof of the main results will be based on a functional analytic approach, which, to the best of our knowledge, is new to solve
both the deterministic and stochastic time-fractional partial differential equations in the variational setting.
This approach is inspired by the work of Stannat \cite{St99} on the theory of generalized Dirichlet forms, which in turn draws essential ideas from \cite{Li69}.
We will establish a general existence result concerning an abstract operator equation (see Theorem \ref{T1} below) which is central to our approach.

We should mention that  time-fractional linear evolution equations in the Gelfand triple setting have first been investigated in \cite{Za09}. Later on the author
also proved the global solvability of a non-degenerate parabolic equation with time-fractional derivative in \cite{Za12} (cf. \cite{ACV16} for more general case).
However, these results cannot be applied to
quasilinear type equations like the porous media or the $p$-Laplace equation. In \cite{JP04} the authors investigate
elliptic-parabolic integro-differential equations with
$L^1$-data. Their framework includes the time-fractional $p$-Laplace equation. However, the authors in \cite{JP04}  consider generalized solutions
($i.e.$ entropy solutions). Therefore, the results of this paper generalize or complement  the corresponding results in \cite{JP04,Za09,Za12}
in the setting of time-fractional quasilinear PDE with monotone coefficients. Recently,
the authors in \cite{VZ15} derive very interesting decay estimates for the solutions of
time-fractional porous medium and $p$-Laplace equations (by assuming the existence), and
the decay behaviour is notably different from the case with usual time derivative. Hence, here we, in particular,
give a positive answer to the question of the existence and
uniqueness of solutions to the time-fractional porous medium equations and $p$-Laplace equations, which are left open in \cite{VZ15}.

  If $\beta=\gamma=1$, then both the deterministic and stochastic equation (\ref{SDE})  have been intensively investigated in the variational setting.
  For the deterministic case, one might refer to the monographs \cite{Ba10,Br73,Li69,Sh97,Z90} and the references therein.
  But also the stochastic case (SPDE) has attracted more and more attention in recent years,
  we refer to some classical references  \cite{BT73,KR79,Par75} and recent works \cite{BR15,BLZ,G,L13,LR10,LR13,LR15,RRW07,Zh09} (see also the references therein).

  The paper
is organized as follows. In Section 2 we present the main results on the existence and uniqueness of solutions to
deterministic and stochastic nonlinear evolution equations with time-fractional derivatives.
In Section 3 we will apply our main results to stochastic time-fractional porous medium equations and $p$-Laplace equations
as model examples. The well-posedness of both equations have been open problems even in the deterministic case.

\section{Main Results}
Let  $(H, \<\cdot,\cdot\>_H)$ be a real separable
Hilbert space identified with its dual space $H^*$ by the Riesz
isomorphism. Let   $V$ be  a real reflexive  Banach space, continuously and densely embedded into $H$. Then we have the
 following Gelfand triple
$$V \subseteq H\equiv H^*\subseteq V^*.$$
Let ${ }_{V^*}\<\cdot,\cdot\>_V$ denote the
 dualization
between  $V$ and its dual space $V^*$. Then it is easy to show that
$$ { }_{V^*}\<u, v\>_V=\<u, v\>_H, \ \  u\in H ,v\in V.$$

Now, for $T \in [0, \infty)$ fixed, we consider the following general nonlinear evolution equation with time-fractional derivative
\begin{equation}\label{1.1}
 \partial_t^{\beta} (u(t)-x)+A(t,u(t))=f(t),  \ 0<t<T,
\end{equation}
where
$$\partial_t^{\beta}(u-x):= \frac{1}{\Gamma(1-\beta)} \frac{d}{dt} \int_0^t (t-s)^{-\beta}(u(s)-x) ds, $$
is the  Riemann--Liouville fractional derivative, which coincides with the Caputo  fractional derivative  if  $u(0)=x$ ($i.e.$ considered $x$
as the initial condition),
$A:[0,\infty) \times V\rightarrow V^*$ is restrictedly measurable, $i.e.$  for  each $dt$-version of
$u\in L^1([0,\infty); V)$, $t\mapsto A(t,u(t))$ is $V^*$-measurable on $[0,\infty)$, and $f\in L^1([0,\infty); V^*)$.

Applying the  Riemann--Liouville fractional integral $I_t^\beta$, defined by
$$ I_t^\beta f(t):=\frac{1}{\Gamma(\beta)}\int_0^t(t-s)^{\beta-1}f(s) ds =: (g_\beta * f)(t),$$
to equation (\ref{1.1}) one obtains that
\begin{equation}\label{IE}
 u(t)= x - \frac{1}{\Gamma(\beta)}\int_0^t (t-s)^{\beta-1} A(s,u(s)) \, ds +  \frac{1}{\Gamma(\beta)} \int_0^t (t-s)^{\beta-1}f(s) \, ds,
\end{equation}
for $dt$-a.e.~$t \in [0, \infty)$.

Here we recall that $\frac{d}{dt}$ in the definition of $\partial^{\beta}_t$  (see (\ref{Caputo})) is understood as a weak derivative.
Defining $\tilde{u}$ to be equal to the right-hand side of \eqref{IE}, then this $dt$-version
of $u$ satisfies $\tilde{u} (0) = x$. Concerning the continuity properties of $\tilde{u}$ we refer to Theorem 2.1 below.
We also remark that below we work with functions $u$ only determined $dt$-a.e.,
so $u (0)$ can be chosen arbitrarily. Therefore,  we write $x$ in \eqref{1.1} instead of $u(0)$.

Now let us formulate the precise conditions on the coefficients in (\ref{1.1}).
Suppose for  fixed $\alpha>1$ that there exist constants $\delta>0$, $C$ and $g\in L^1([0,\infty);  \mathbb{R}_{+})$ such that the following conditions hold for all $t\in[0,\infty)$ and $v,v_1,v_2\in V$.
\begin{enumerate}
 \item [$(H1)$] (Hemicontinuity)
      The map  $ s\mapsto { }_{V^*}\<A(t,v_1+s v_2),v\>_V$ is  continuous on $\mathbb{R}$.

\item[$(H2)$] (Monotonicity)
     $$  { }_{V^*}\<A(t,v_1)-A(t, v_2), v_1-v_2\>_V
     \ge 0. $$

\item [$(H3)$] (Coercivity)
    $$  { }_{V^*}\<A(t,v), v\>_V  \ge  \delta
    \|v\|_V^{\alpha}  - g(t).$$

\item[$(H4)$] (Growth)
   $$ \|A(t,v)\|_{V^*} \le  g(t)^{\frac{\alpha-1}{\alpha}} +
   C\|v\|_V^{\alpha-1} .$$
\end{enumerate}

We define the following  spaces,
\begin{align*}
  \mathcal{V}&=L^\alpha([0,\infty); V)\cap L^2([0,\infty); H),\\
  \mathcal{H}&=L^2([0,\infty); H),\\
  \mathcal{V}^*&=L^{\frac{\alpha}{\alpha-1}}([0,\infty); V^*)+L^2([0,\infty); H),
\end{align*}
where
$\| \cdot \|_{\mathcal V} := \max ( \| \cdot \|_{L^\alpha([0,\infty); V)} , \| \cdot \|_{\mathcal H} )$ and for $u \in \mathcal V^*$
$$\| u \|_{\mathcal{V}^*} := \inf \left\{ \| u_1 \|_{L^{\frac{\alpha}{\alpha-1}}([0,\infty); V^*)} + \| u_2 \|_{\mathcal H}:
u_1 \in L^{\frac{\alpha}{\alpha-1}}([0, \infty); V^*), u_2 \in \mathcal H \text{ such that } u=u_1+u_2\right\}.$$

Then for $x=0$ (the case for general initial condition $x$ will then follow easily as we shall see below)
the original equation (\ref{1.1}) can be rewritten in the following form
\begin{equation}\label{1.1'}
 \partial_t^{\beta}u+\mathcal{A}u=f,
\end{equation}
where
$$\mathcal{A}: \mathcal{V}\rightarrow \mathcal{V}^*; (\mathcal{A}u)(t)=A(t,u(t)), \ t\in[0,\infty).$$

It is easy to see that $\mathcal{A}: \mathcal{V}\rightarrow \mathcal{V}^*$ is  monotone, coercive and bounded on bounded sets.

We shall see below that $\partial_t^{\beta}$ with domain $\{ u \in \mathcal H \mid r \mapsto |r|^\beta \hat u(r) \in L^2( \R ; H)\}\cap \V$ is closable as an operator from $\V$ to $\V^*$.
Let $(\partial_t^{\beta}, \F)$ denote its closure with norm $\|u\|_{\F} := \left( \|u\|_{\V}^2 + \|\partial_t^{\beta} u\|_{\V^*}^2 \right)^{\frac12} $, $u \in \F$.
Here $\hat u$ denotes the Fourier transform of $u$, considered as a function from $\R$ to $H$, setting $u \equiv 0$ on $\R \setminus (- \infty, 0)$.

\begin{thm}\label{Th1}
Suppose that $T \in [0,\infty)$ and $A \colon [0, \infty) \times V \rightarrow V^*$ satisfies $(H1)$-$(H4)$. Then for every $x \in V$ and $f \in \mathcal{V}^*$, (\ref{1.1}) has a unique solution $u$ such that $u - x \varphi \in \F$ for every $\varphi \in L^\alpha([0,\infty);  \mathbb{R})$ with $\varphi \equiv 1$ on $[0, T+1)$. In particular,
\begin{equation}\label{eq:th1-1}
  u-x\varphi\in L^\alpha([0,\infty); V); \   \partial_t^\beta (u- x \varphi) \in L^{\frac{\alpha}{\alpha-1}}([0,\infty); V^*)
\end{equation}
and for $dt$-a.e.~$t \in [0, T]$,
\begin{equation}\label{IE2}
 u(t)= x - \frac{1}{\Gamma(\beta)}\int_0^t (t-s)^{\beta-1} A(s,u(s))\, ds +  \frac{1}{\Gamma(\beta)} \int_0^t (t-s)^{\beta-1}f(s) \, ds .
\end{equation}
Furthermore, $t \mapsto \frac{1}{\Gamma(1-\beta)} \int_0^t (t-s)^{-\beta} (u(s) - x \varphi(s)) \, ds$ has a continuous $H$-valued $dt$-version,
and if $\beta \in (\frac{\alpha-1}{\alpha}, 1)$,  $t \mapsto u(t)$ has a  continuous $V^*$-valued $dt$-version.

\end{thm}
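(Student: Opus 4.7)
The plan is to reduce the problem to the case $x = 0$ via a linear shift, apply the abstract existence result (Theorem \ref{T1}) to the closed linear operator $\partial_t^{\beta}$ summed with the nonlinear operator $\mathcal{A}$, and deduce the integral equation \eqref{IE2} and the regularity assertions from the resulting solution. To reduce, I would fix $\varphi \in L^\alpha([0,\infty); \R)$ with $\varphi \equiv 1$ on $[0, T+1)$ and set $v := u - x\varphi$. Since $v$ vanishes at $t=0$ in the $dt$-a.e.\ sense and $\varphi \equiv 1$ on $[0,T+1)$, on this interval the Riemann--Liouville derivative $\partial_t^\beta(u-x)$ coincides with $\partial_t^\beta v$. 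Defining $\tilde A(t,w) := A(t, w + x\varphi(t))$, the conditions $(H1)$ and $(H2)$ transfer directly, while $(H3)$ and $(H4)$ survive after absorbing terms of the form $\|x\|_V^\alpha |\varphi(t)|^\alpha$ into a new $g$ and a modified $\tilde f \in \VV$ via Young's inequality. It then suffices to solve $\partial_t^\beta v + \tilde{\mathcal{A}} v = \tilde f$ for $v \in \F$.

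For the existence step, the key analytic ingredient is that $\partial_t^{\beta}$, extended by zero to $(-\infty, 0)$, has Fourier symbol $(-ir)^\beta$, so Plancherel yields
\begin{equation*}
{}_{\VV}\<\partial_t^\beta u, u\>_\V \;=\; \cos\!\left(\frac{\beta\pi}{2}\right) \int_\R |r|^\beta |\hat u(r)|_H^2 \, dr \;\geq\; 0
\end{equation*}
for $u$ in the Fourier-defined dense subdomain, and hence for all $u \in \F$ by closure. This positivity, together with the monotonicity, hemicontinuity, coercivity, and boundedness properties of $\tilde{\mathcal{A}}$, will allow Theorem \ref{T1} to produce a solution $v \in \F$ of $\partial_t^\beta v + \tilde{\mathcal{A}} v = \tilde f$. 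Applying the fractional integral $I_t^\beta$ to both sides and using $I_t^\beta \partial_t^\beta v = v$ (valid because $v$ vanishes at $0$) yields \eqref{IE2} for $u = v + x\varphi$ on $[0,T]$. Uniqueness then follows by subtracting two solutions, pairing their difference in the $\V$-$\VV$ duality, and using the positivity of $\partial_t^\beta$ together with $(H2)$ to deduce that the difference is zero.

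For the regularity assertions I would analyze the two natural representations of the solution. The function $t \mapsto \frac{1}{\Gamma(1-\beta)}\int_0^t (t-s)^{-\beta} (u - x\varphi)(s)\,ds = I_t^{1-\beta}(u - x\varphi)$ has weak time-derivative $\partial_t^\beta(u-x\varphi) \in \VV$ by definition and values in $H$ by construction; a fractional analogue of the Lions chain rule applied to $\|I_t^{1-\beta}(u - x\varphi)\|_H^2$ should then provide the continuous $H$-valued version. For the $V^*$-continuity of $u$ under $\beta > (\alpha-1)/\alpha$, I would apply H\"older directly to \eqref{IE2}: since $A(\cdot,u) \in L^{\alpha/(\alpha-1)}([0,T]; V^*)$ by $(H4)$ and the kernel $(t-s)^{\beta-1}$ lies in $L^\alpha(0,t)$ precisely when $(\beta-1)\alpha > -1$, dominated convergence delivers continuity of the convolution in $t$; a parallel argument handles the $f$-contribution.

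The main obstacle I expect is the fractional chain rule needed for the $H$-valued continuity of $I_t^{1-\beta}(u - x\varphi)$, which has no classical counterpart in the case $\beta=1$ (where $I_t^{1-\beta}$ reduces to the identity and the usual Lions argument applies directly). Closely tied to this is the precise identification of the closure $(\partial_t^\beta, \F)$ so that the Fourier-based positivity estimate extends from the initial domain to arbitrary $u \in \F$ and can actually be invoked inside Theorem \ref{T1}; together these are the issues distinguishing the fractional case from the classical variational theory.
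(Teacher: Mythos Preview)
Your existence strategy---shift by $x\varphi$ to reduce to zero initial data, then invoke Theorem~\ref{T1} with $\Lambda=-\partial_t^\beta$---is exactly the paper's. The paper, however, realizes $-\partial_t^\beta$ as the generator of the subordinated right-shift semigroup $U_t^\beta f=\int_0^\infty U_s f\,\eta_t^\beta(ds)$ (Proposition~\ref{Prop2.4}), which supplies the resolvent structure that the Yosida approximation inside Theorem~\ref{T1} requires; your Fourier positivity gives the dissipativity half of this but not the semigroup itself. For the $H$-continuity of $g_{1-\beta}*(u-x\varphi)$ no fractional chain rule is needed: this function lies in $L^\alpha([0,T];V)$ with weak time derivative $\partial_t^\beta(u-x\varphi)\in L^{\alpha/(\alpha-1)}([0,T];V^*)$, so the \emph{classical} Lions embedding (\cite[Theorem~1.19]{Ba10}) already yields continuity into $H$, and that is precisely what the paper does in Proposition~\ref{Prop2.6}.

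The genuine gap is in uniqueness. Your plan pairs $\partial_t^\beta w$ with $w$ in the $\VV$--$\V$ duality over $[0,\infty)$ and uses Fourier positivity together with $(H2)$ to force $w=0$. But the identity $\partial_t^\beta w=-(A(\cdot,u_1)-A(\cdot,u_2))$ is only available on $(0,T)$; on $(T,\infty)$ the two solutions are merely elements of $x\varphi+\F$ with no equation relating them, so the integrand ${}_{V^*}\langle(\partial_t^\beta w)(t),w(t)\rangle_V$ has no sign there and the global pairing cannot be controlled. Even if the pairing vanished, concluding $w=0$ from ${}_{\VV}\langle\partial_t^\beta w,w\rangle_\V=0$ requires extending the Fourier identity from $D(\Lambda,\H)$ to all of $\F$, which you flag but do not resolve. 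The paper's route is different and local in $t$: following \cite{Za09,VZ08} it approximates $g_{1-\beta}$ by kernels $g^n\in W^{1,1}$ and invokes the pointwise inequality $\frac{d}{dt}(g^n*\|w\|_H^2)(t)\le 2\langle\frac{d}{dt}(g^n*w)(t),w(t)\rangle_H$; after using the equation on $(0,T)$, convolving with $g_\beta$, and letting $n\to\infty$, one obtains $\|w(t)\|_H^2\le 0$ for a.e.\ $t\in[0,T]$. This needs no information about the solutions beyond $[0,T]$ and no Fourier representation.
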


\begin{rem}\label{R1}
$(i)$ As a matter of fact, the monotonicity assumption $(H2)$ of $A$ is a sufficient condition to imply that $\mathcal{A}: \mathcal{V} \rightarrow \V^*$ is pseudo-monotone.
The above theorem still holds if we replace $(H2)$ by assuming $\mathcal{A}: \mathcal{V} \rightarrow \V^*$ is pseudo-monotone.
In \cite{LR13,LR15}, a local monotonicity condition is assumed for $A$ which yields that $A(t,\cdot): V\rightarrow V^*$ is pseudo-monotone provided $V \subseteq H$
is a compact embedding. However, it is still not clear whether it implies that $\mathcal{A}: \mathcal{V} \rightarrow \V^*$ is pseudo-monotone.

$(ii)$ It is easy to see from the proof that to have uniqueness of solutions one can replace $(H2)$ by the following weak monotonicity:
 $$  { }_{V^*}\<A(t,v_1)-A(t, v_2), v_1-v_2\>_V + K \|v_1-v_2\|_H^2
     \ge 0, $$
where $K$ is a positive constant.

$(iii)$ If $A$ is the subdifferential of a convex function, $i.e.$
$$  A(t,u)=\partial \varphi (t,u),  \  u\in V, $$
where $\varphi(t, \cdot): V \rightarrow \mathbb{R}$ is convex and continuous, then for
$$  \Phi(u):=\int_0^T \varphi(t,u(t))dt, \ u\in \mathcal{V}, $$
we have $\partial\Phi(u)=\mathcal{A}(u), \ \forall u\in \mathcal{V}. $ Then
\begin{align*}
 ~~& ~~ \mathcal{A}(u)-\Lambda u=f \\
\Leftrightarrow&~~ \partial\Phi(u)-\Lambda u=f \\
\Leftrightarrow& ~~  u=\arg\min_{v\in\mathcal{V}}\left\{ \Phi(v)+\Phi^*(f+\Lambda v)-  { }_{\V^*}\<\Lambda v, v\>_{\V} -  { }_{\V^*}\<f, v\>_{\V}  \right\},
\end{align*}
where $ \Phi^*(\eta):=\sup_{v\in \mathcal{V}}  \left\{  { }_{\V^*}\<\eta, v\>_{\V} -\Phi(v)   \right\} $.
\end{rem}
\bigbreak

Now we  recall the definition of  pseudo-monotone operator, which is  a very useful
generalization of monotone operator and was first
introduced
 by Br\'{e}zis in \cite{Br68}.  We use the  notation ``$\rightharpoonup$''
for  weak convergence in Banach spaces.

\begin{defn} The operator $M: \mathcal{V}\rightarrow \mathcal{V}^*$ is called pseudo-monotone  if $v_n\rightharpoonup v$ in $\mathcal{V}$
as $n\rightarrow \infty$ and
$$     \limsup_{n\rightarrow\infty} { }_{\mathcal{V}^*}\<M(v_n), v_n-v\>_{\mathcal{V}}\le 0 $$
implies for all $u\in \mathcal{V}$
$$   { }_{\mathcal{V}^*}\<M(v), v-u\>_{\mathcal{V}} \le  \liminf_{n\rightarrow\infty} { }_{\mathcal{V}^*}\<M(v_n), v_n-u\>_{\mathcal{V}}.  $$
 \end{defn}

\begin{rem}\label{r2.1} Browder introduced
 a slightly different definition of  a pseudo-monotone operator in \cite{Bro77}:
An operator $M: \mathcal{V}\rightarrow \mathcal{V}^*$ is called pseudo-monotone   if
$v_n\rightharpoonup v$ in $\mathcal{V}$ as $n\rightarrow \infty$ and
$$     \limsup_{n\rightarrow\infty} { }_{\mathcal{V}^*}\<M(v_n), v_n-v\>_{\mathcal{V}}\le 0 $$
implies
$$   M(v_n)\rightharpoonup M(v)\ \    \text{and}   \  \
\lim_{n\rightarrow\infty} { }_{\mathcal{V}^*}\<M(v_n), v_n\>_{\mathcal{V}}={ }_{\mathcal{V}^*}\<M(v), v\>_{\mathcal{V}}.  $$
In particular, if $M$ is bounded on bounded sets, then  these two definitions are equivalent, we refer to \cite{LR13,LR15}.
\end{rem}

\begin{lem}\label{L2}
 If $ M: \mathcal{V} \rightarrow \V^*$ is pseudo-monotone, bounded on bounded sets and coercive, then $M$ is surjective,
  i.e. for any $f\in \VV$, the equation $M u=f$ has a solution.
\end{lem}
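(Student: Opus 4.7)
The plan is to follow the classical Br\'ezis--Browder strategy via a Galerkin approximation, reducing surjectivity in $\V$ to an elementary fixed-point argument in finite dimensions and then exploiting pseudo-monotonicity to pass to the limit. First I fix $f \in \V^*$. Since $V$ is separable and reflexive and $\alpha > 1$, the space $\V = L^\alpha([0,\infty); V) \cap L^2([0,\infty); H)$ is itself separable and reflexive, so I may choose an increasing sequence of finite-dimensional subspaces $\V_1 \subset \V_2 \subset \cdots \subset \V$ whose union is norm-dense in $\V$.

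Next I would establish finite-dimensional solvability on each $\V_n$. The key point is that the restriction of $M$ to $\V_n$ is norm-continuous into $\V_n^*$: pseudo-monotonicity together with boundedness on bounded sets implies demicontinuity of $M\colon \V \to \V^*$ (along any $u_k \to u$ strongly one has $\limsup { }_{\V^*}\<Mu_k, u_k - u\>_{\V} \le 0$, so the pseudo-monotone inequality pins down the weak limit of $Mu_k$ as $Mu$), and on the finite-dimensional space $\V_n$ demicontinuity coincides with norm continuity. Coercivity gives $ { }_{\V^*}\<Mu - f, u\>_\V > 0 $ for all $u \in \V_n$ with $\|u\|_\V$ sufficiently large, so the standard corollary of Brouwer's fixed-point theorem for continuous vector fields pointing outward on a large sphere yields some $u_n \in \V_n$ with $ { }_{\V^*}\<Mu_n, v\>_{\V} = { }_{\V^*}\<f, v\>_{\V} $ for every $v \in \V_n$.

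Finally I pass to the limit. Taking $v = u_n$ and applying coercivity produces a uniform bound on $\|u_n\|_\V$, hence by boundedness on bounded sets also a bound on $\|Mu_n\|_{\V^*}$. Extracting subsequences gives $u_n \rightharpoonup u$ in $\V$ and $Mu_n \rightharpoonup \chi$ in $\V^*$; testing against any fixed $v \in \bigcup_k \V_k$ and then invoking density forces $\chi = f$. Moreover the Galerkin identity with $v = u_n$ gives $ { }_{\V^*}\<Mu_n, u_n\>_{\V} = { }_{\V^*}\<f, u_n\>_{\V} \to { }_{\V^*}\<f, u\>_{\V} $, from which $\limsup_n { }_{\V^*}\<Mu_n, u_n - u\>_{\V} = 0$. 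The defining inequality of pseudo-monotonicity then yields, for every $w \in \V$,
$$
{ }_{\V^*}\<Mu, u - w\>_{\V} \le \liminf_n { }_{\V^*}\<Mu_n, u_n - w\>_{\V} = { }_{\V^*}\<f, u - w\>_{\V},
$$
and substituting $w = u \pm t v$ with $t > 0$ and letting $v$ range over $\V$ forces $Mu = f$.

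I expect the main obstacle to be the finite-dimensional existence step: one must verify cleanly that pseudo-monotonicity plus boundedness on bounded sets really implies norm continuity on each $\V_n$, and that the Brouwer-type ``no retraction'' argument applies to the continuous vector field on $\V_n$ induced by $M - f$. Everything after that is a routine weak-compactness plus pseudo-monotone-limit computation.
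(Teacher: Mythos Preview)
The paper does not prove this lemma at all: its ``proof'' is a citation to Br\'ezis and to Zeidler, Theorem~27.A. Your sketch is essentially the standard argument found in those references (Galerkin reduction, Brouwer-type existence on each finite-dimensional layer, uniform a~priori bounds from coercivity, and passage to the limit via pseudo-monotonicity), so in substance you are reproducing the proof behind the citation rather than doing something different.

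One genuine technical point: you begin by asserting that $V$, and hence $\mathcal V$, is separable in order to build the nested sequence $\mathcal V_n$. The paper only assumes $V$ is reflexive; separability of $H$ does not force separability of $V$ in its own norm. Zeidler's Theorem~27.A is stated for arbitrary real reflexive Banach spaces, and the standard way to remove the separability hypothesis is to first solve the problem on each separable closed subspace containing a given countable set of test vectors and then pass to the full space by a compactness/consistency argument. In all the applications in this paper ($V = L^p(\Lambda)$ or $V = H_0^{1,p}(\Lambda)$) $V$ is separable, so your argument goes through as written there, but for the lemma in the generality stated you should either add the separability hypothesis or invoke the reduction step.
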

\begin{proof}
This is a classical result due to Br\'{e}zis. For the proof we refer to \cite{Br68} or \cite[Theorem 27.A]{Z90}.
\end{proof}

As in \cite{St99} we consider a generator $\Lambda$, with domain $D(\Lambda, \mathcal H)$, of a $C_0$-contraction semigroup of linear operators on $\mathcal H$ whose restrictions to $\mathcal V$ form a $C_0$-semigroup of linear operators on $\mathcal V$.
The generator of the latter is again $\Lambda$, but with domain $D(\Lambda, \mathcal V) := \{ u \in \mathcal V \cap D(\Lambda, \mathcal H)  \mid \Lambda u \in \mathcal V\}$.
Then $D(\Lambda, \mathcal V)$ is dense in $\mathcal V$, hence so is $D(\Lambda, \mathcal H) \cap \mathcal V$.

By \cite[Lemma 2.3]{St99}, $\Lambda \colon D(\Lambda, \mathcal H) \cap \mathcal V \rightarrow \mathcal V^*$ is closable as an operator from $\mathcal V$ to $\mathcal V^*$.
Denoting its closure by $(\Lambda, \mathcal F)$, $\F$ is a Banach space with norm $\| u \|_{\F} := (\|u\|^2_{\V} + \| \Lambda u\|^2_{\V^*})^{\frac12}$, $u \in \F$.

We will use the following abstract result in which $\Lambda$ will later be taken to be $-\partial_t^\beta$ to solve equation (\ref{1.1}).
This abstract result is a generalization of \cite[Proposition 3.2]{St99}.
We replace the strong monotonicity assumption in \cite[Proposition 3.2]{St99} by the classical monotonicity $(H2)$ (see also Remark \ref{R1}) and consider a reflexive Banach space $\mathcal V$ in place of the Hilbert space $\mathcal V$ in \cite{St99}.

\begin{thm}\label{T1}
 For any $f\in \VV$, there exists a solution $u\in \mathcal{F}$ of the equation $\A u- \Lambda u=f$.
\end{thm}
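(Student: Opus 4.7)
The plan is to regularize $\Lambda$ by its Yosida approximation $\Lambda_\varepsilon := \Lambda(I-\varepsilon\Lambda)^{-1}$, solve the regularized problem via Br\'ezis's surjectivity result (Lemma \ref{L2}), and then pass to the limit $\varepsilon \downarrow 0$ using pseudo-monotonicity of $\mathcal{A}$ together with the weak closedness of the graph of $(\Lambda, \mathcal{F})$ in $\mathcal{V}\times\mathcal{V}^*$. Since the contraction semigroup restricts from $\mathcal{H}$ to $\mathcal{V}$ with the same generator, the resolvent $J_\varepsilon:=(I-\varepsilon\Lambda)^{-1}$ and $\Lambda_\varepsilon=\Lambda J_\varepsilon$ are bounded linear operators on both $\mathcal{H}$ and $\mathcal{V}$, and $\langle \Lambda_\varepsilon v, v\rangle_{\mathcal{H}}\le 0$ for all $v\in\mathcal{V}$. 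Therefore $\mathcal{A}-\Lambda_\varepsilon\colon \mathcal{V}\to\mathcal{V}^*$ is the sum of a pseudo-monotone operator bounded on bounded sets and a bounded linear monotone operator, so it is itself pseudo-monotone, bounded on bounded sets, and, combining $(H3)$ with the dissipativity of $\Lambda_\varepsilon$, coercive. Lemma \ref{L2} then yields $u_\varepsilon\in\mathcal{V}$ with $\mathcal{A} u_\varepsilon - \Lambda_\varepsilon u_\varepsilon = f$.

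Testing this identity against $u_\varepsilon$ and using $\langle \Lambda_\varepsilon u_\varepsilon, u_\varepsilon\rangle_{\mathcal{H}}\le 0$ together with $(H3)$ gives a uniform bound $\|u_\varepsilon\|_{\mathcal{V}}\le C$; the growth bound $(H4)$ then controls $\|\mathcal{A} u_\varepsilon\|_{\mathcal{V}^*}$, and via the equation also $\|\Lambda_\varepsilon u_\varepsilon\|_{\mathcal{V}^*}$. By reflexivity a subsequence satisfies $u_\varepsilon\rightharpoonup u$ in $\mathcal{V}$, and $\mathcal{A} u_\varepsilon\rightharpoonup \eta$, $\Lambda_\varepsilon u_\varepsilon\rightharpoonup \xi$ in $\mathcal{V}^*$, with $\eta-\xi=f$. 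To place $(u,\xi)$ on the graph of $(\Lambda,\mathcal{F})$, I consider $J_\varepsilon u_\varepsilon \in D(\Lambda,\mathcal{V})$: it stays bounded in $\mathcal{V}$, $\Lambda J_\varepsilon u_\varepsilon = \Lambda_\varepsilon u_\varepsilon \rightharpoonup \xi$ in $\mathcal{V}^*$, and $J_\varepsilon u_\varepsilon - u_\varepsilon = -\varepsilon \Lambda_\varepsilon u_\varepsilon \to 0$ in $\mathcal{V}^*$. Hence $J_\varepsilon u_\varepsilon\rightharpoonup u$ in $\mathcal{V}$, and since the graph of the closed linear operator $(\Lambda,\mathcal{F})$ is a closed (hence weakly closed) linear subspace of $\mathcal{V}\times\mathcal{V}^*$, I conclude $u\in\mathcal{F}$ and $\Lambda u = \xi$.

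It remains to identify $\eta=\mathcal{A} u$, for which the pseudo-monotonicity of $\mathcal{A}$ reduces the task to showing $\limsup_\varepsilon {}_{\mathcal{V}^*}\langle\mathcal{A} u_\varepsilon, u_\varepsilon-u\rangle_{\mathcal{V}}\le 0$. The equation gives
\begin{equation*}
\langle\mathcal{A} u_\varepsilon, u_\varepsilon-u\rangle = \langle f, u_\varepsilon-u\rangle + \langle \Lambda_\varepsilon u_\varepsilon, u_\varepsilon\rangle_{\mathcal{H}} - \langle \Lambda_\varepsilon u_\varepsilon, u\rangle,
\end{equation*}
where the first term vanishes and the third converges to $\langle \xi,u\rangle=\langle \Lambda u, u\rangle$. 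The middle term is $\le 0$ by dissipativity, but to close the estimate one needs the sharper inequality $\limsup_\varepsilon \langle \Lambda_\varepsilon u_\varepsilon, u_\varepsilon\rangle_{\mathcal{H}} \le \langle \Lambda u, u\rangle$. This is the main obstacle, replacing Stannat's use of strong monotonicity; I plan to derive it from the Yosida identity
\begin{equation*}
\langle \Lambda_\varepsilon v, v\rangle_{\mathcal{H}} = \langle \Lambda J_\varepsilon v, J_\varepsilon v\rangle_{\mathcal{H}} - \varepsilon\|\Lambda_\varepsilon v\|^2_{\mathcal{H}}
\end{equation*}
applied with $v=u_\varepsilon$, combined with the weak lower semicontinuity of the closed nonnegative quadratic form $w \mapsto -\langle\Lambda w,w\rangle$ along $J_\varepsilon u_\varepsilon\rightharpoonup u$. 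Once this is established, pseudo-monotonicity of $\mathcal{A}$ yields $\eta=\mathcal{A} u$, and hence $\mathcal{A} u-\Lambda u=\eta-\xi=f$.
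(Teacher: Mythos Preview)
Your overall strategy—Yosida regularization of $\Lambda$, surjectivity via Lemma \ref{L2}, a priori bounds, weak limits, and identification of $u\in\mathcal F$ through the weakly closed graph—coincides with the paper's proof. The only point of divergence is the final identification $\eta=\mathcal A u$, and there your proposed route has a genuine gap.

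You correctly reduce the pseudo-monotonicity step to the inequality $\limsup_{\varepsilon}\langle \Lambda_\varepsilon u_\varepsilon,u_\varepsilon\rangle_{\mathcal H}\le {}_{\mathcal V^*}\langle \Lambda u,u\rangle_{\mathcal V}$, and your Yosida identity $\langle \Lambda_\varepsilon v,v\rangle_{\mathcal H}=\langle \Lambda J_\varepsilon v,J_\varepsilon v\rangle_{\mathcal H}-\varepsilon\|\Lambda_\varepsilon v\|_{\mathcal H}^2$ is correct. But the plan to finish via ``weak lower semicontinuity of the closed nonnegative quadratic form $w\mapsto -\langle \Lambda w,w\rangle$'' is not justified: $\Lambda$ here is \emph{not} symmetric (it is $-\partial_t^\beta$), so $w\mapsto -\langle\Lambda w,w\rangle_{\mathcal H}$ is merely the diagonal of a non-symmetric bilinear form, not the square of a seminorm. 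Weak lower semicontinuity of such a functional along $J_\varepsilon u_\varepsilon\rightharpoonup u$ in $\mathcal V$ with $\Lambda J_\varepsilon u_\varepsilon\rightharpoonup \Lambda u$ in $\mathcal V^*$ does not follow from general principles; for the ordinary time derivative the symmetric part even vanishes on suitable cores, so nothing like lower semicontinuity is available.

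The paper avoids this entirely by a simpler splitting that uses dissipativity on the \emph{difference} rather than on $u_\varepsilon$ alone: writing
\[
\langle \Lambda_\varepsilon u_\varepsilon,\,u_\varepsilon-u\rangle
=\langle \Lambda_\varepsilon(u_\varepsilon-u),\,u_\varepsilon-u\rangle
+\langle \Lambda_\varepsilon u,\,u_\varepsilon-u\rangle,
\]
the first term is $\le 0$ by dissipativity of each $\Lambda_\varepsilon$, and for the second one uses that, for $u\in\mathcal F$, $\Lambda_\varepsilon u\to \Lambda u$ strongly in $\mathcal V^*$ (because the resolvents extend to a $C_0$-family on $\mathcal V^*$ and $\Lambda_\varepsilon u=J_\varepsilon\Lambda u$ on $\mathcal F$), so that $\langle \Lambda_\varepsilon u,\,u_\varepsilon-u\rangle\to 0$ against the weakly null sequence $u_\varepsilon-u$. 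This immediately gives $\limsup\langle \Lambda_\varepsilon u_\varepsilon,u_\varepsilon-u\rangle\le 0$, hence the desired inequality, without any appeal to lower semicontinuity of a quadratic form. Replacing your last paragraph by this argument closes the gap and makes your proof coincide with the paper's.
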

\begin{proof}
\textbf{Step 1}: For $\alpha>0$, consider the Yosida approximation $\Lambda_\alpha : \mathcal{V}\rightarrow \VV$ defined by
$$ \Lambda_\alpha=\alpha(\alpha V_\alpha- I),  $$
where $V_\alpha=(\alpha-\Lambda)^{-1}$, $\alpha > 0$, is the resolvent of $( \Lambda, D(\Lambda, \mathcal H))$ (on $\mathcal H$).

Note that $\A-\Lambda_\alpha$ is pseudo-monotone, coercive and bounded on bounded sets.
By Lemma \ref{L2} there exists $u_\alpha\in \V$ such that $\A u_\alpha- \Lambda_\alpha u_\alpha=f$.

\textbf{Step 2}: Note that
$$  { }_{\mathcal{V}^*}\<\A u_{\alpha}, u_{\alpha}\>_{\V}\le  { }_{\mathcal{V}^*}\<\A u_{\alpha}-\Lambda_\alpha u_\alpha, u_{\alpha}\>_{\V}= { }_{\mathcal{V}^*}\<f, u_{\alpha}\>_{\V} \le \|f\|_{\VV} \|u_\alpha\|_{\V}.  $$
Hence, by the coercivity assumption $(H3)$ we obtain that $\sup_{\alpha>0} \|u_\alpha\|_{\V} < \infty$, and hence $\sup_{\alpha>0} \|\A u_\alpha\|_{\VV} < \infty$ by $(H4)$.

Since for any $v\in \V$
 \begin{equation*}
 \begin{split}
    { }_{\mathcal{V}^*}\<\Lambda_\alpha u_{\alpha}, v\>_{\V}&=-{ }_{\mathcal{V}^*}\<\A u_\alpha-\Lambda_\alpha u_{\alpha}, v\>_{\V} + { }_{\mathcal{V}^*}\<\A u_\alpha, v\>_{\V} \\
    &=-  { }_{\mathcal{V}^*}\<f, v\>_{\V} + { }_{\mathcal{V}^*}\<\A u_\alpha, v\>_{\V} \\
                                         &\le (\|f\|_{\VV}+\|\A u_\alpha\|_{\VV})\|v\|_{\V},
 \end{split}
\end{equation*}
we have  $\sup_{\alpha>0} \|\Lambda_\alpha u_\alpha\|_{\VV} < \infty$.

By the apriori estimates above we know there exists a subsequence $\alpha_n \rightarrow \infty$ such that
\begin{equation*}
 \begin{split}
    u_{\alpha_n} &\rightharpoonup u\  \ \text{in}\  \V ; \\
    \A u_{\alpha_n} &\rightharpoonup h\  \ \text{in}\  \VV ; \\
    \Lambda_{\alpha_n}  u_{\alpha_n} &\rightharpoonup  g\  \ \text{in}\  \VV.
 \end{split}
\end{equation*}
So, it is easy to see that $h-g=f$.

Note that
 $$ \alpha_n V_{\alpha_n}   u_{\alpha_n} \rightharpoonup u\  \ \text{in}\  \V .$$
So we have
 $$ \Lambda  \alpha_n V_{\alpha_n}   u_{\alpha_n} \rightharpoonup  g\  \ \text{in}\  \VV ,$$
 since $ \Lambda  \alpha_n V_{\alpha_n}   u_{\alpha_n}=\Lambda_{\alpha_n}  u_{\alpha_n}  $. Hence $\Lambda u=g$.
 Note that $ \| \alpha_n V_{\alpha_n}   u_{\alpha_n}\|_{\V} \le 2C \|u_{\alpha_n}\|_{\V} $, hence we have
 $$ \sup_n \| \alpha_n V_{\alpha_n}   u_{\alpha_n}\|_{\mathcal{F}} <\infty,  $$
 which implies that $ u\in \mathcal{F}$. 

\textbf{Step 3}: Now we only need to show $\A u=h$.

Since  $u_{\alpha_n} \rightharpoonup u$,
 \begin{equation*}
 \begin{split}
  & \limsup_{n\rightarrow\infty} { }_{\mathcal{V}^*}\<\A u_{\alpha_n}, u_{\alpha_n}-u\>_{\V} \\
  =& \limsup_{n\rightarrow\infty} { }_{\mathcal{V}^*}\<\Lambda_{\alpha_n} u_{\alpha_n}+f, u_{\alpha_n}-u\>_{\V} \\
  =& \limsup_{n\rightarrow\infty} { }_{\mathcal{V}^*}\<\Lambda_{\alpha_n} u_{\alpha_n}, u_{\alpha_n}-u\>_{\V} \\
 \le&  { }_{\mathcal{V}^*}\<\Lambda u, u-u\>_{\V}=0    ,
 \end{split}
\end{equation*}
where the inequality follows from ${ }_{\mathcal{V}^*}\<\Lambda_{\alpha_n} ( u_{\alpha_n}-u ), u_{\alpha_n}-u\>_{\V} \le 0 $.

So, we have
$$\limsup_{n\rightarrow\infty} { }_{\mathcal{V}^*}\<\A u_{\alpha_n}, u_{\alpha_n}\>_{\V} \le  { }_{\mathcal{V}^*}\< h, u \>_{\V}. $$
Hence, by the pseudomonotonicity, we have for any $w\in \V$
 \begin{equation*}
 \begin{split}
  & { }_{\mathcal{V}^*}\<\A u, u- w\>_{\V} \\
  \le & \liminf_{n\rightarrow\infty} { }_{\mathcal{V}^*}\< \A u_{\alpha_n}, u_{\alpha_n}-w\>_{\V} \\
  \le & \liminf_{n\rightarrow\infty} { }_{\mathcal{V}^*}\<\A u_{\alpha_n}, u_{\alpha_n}\>_{\V}- { }_{\mathcal{V}^*}\< h, w\>_{\V} \\
  \le & { }_{\mathcal{V}^*}\< h, u-w\>_{\V},
 \end{split}
\end{equation*}
which implies $\A u=h$ since $w \in \V$ was arbitrary.
\end{proof}

Now we want to show that we can take $\Lambda := - \partial_t^{\beta},$ $i.e.$, we have to show that $- \partial_t^{\beta}$ generates a $C_0$-semigroup of contractions on $\mathcal{H}$ which can be restricted to a $C_0$-semigroup on $\mathcal{V}$.
To this end, let us define the following ``shift to the right'' semigroup $U_t$, $t > 0$, on $\mathcal{H}$. For $f \in \mathcal{H}$, $t \geq 0$, define
\begin{align}\label{eq:th2.3-1}
U_t f(r) := \mathbbm{1}_{[0, \infty)} (r - t) f (r-t), \ r \in [0, \infty).
\end{align}
Then it is trivial to check that $(U_t)_{t > 0}$ is a $C_0$-contraction semigroup on $\mathcal{H}$ and it obviously can be restricted to a $C_0$-semigroup on $\mathcal{V}$ (even in this case consisting also of contractions on $\mathcal{V}$).
Now fix $\beta \in (0,1)$ and define for $f \in \mathcal{H}$
\begin{align}\label{eq:th2.3-2}
 U_t^{\beta} f : = \int\limits_{0}^{\infty} U_s f \, \eta_t^{\beta} (ds), \ t \geq 0,
\end{align}
where $(\eta_t^{\beta})_{t >0}$ denotes the one-sided stable semigroup
(of probability measures on $\big([0, \infty), \mathcal{B} ([0, \infty))\big)$) of order $\beta$, $i.e.$ we have for its Fourier transform
\begin{align*}
 \hat{\eta}_t^{\beta} (s) = \int \limits_{0}^{\infty} e^{i s r} \, \eta_t^{\beta} (dr) = e^{- t(i s)^{\beta}}, \ s \in \R, \ t > 0.
\end{align*}
We note here that
\begin{align}\label{eq:th2.3-3}
 (is)^{\beta} = [\cos(\frac{\beta \pi}{2} \text{sign}(s)) +i \sin(\frac{\beta \pi}{2} \text{sign}(s)) ] |s|^\beta, \ s \in \R,
\end{align}
hence $\eta_t^\beta(ds) $ is absolutely continuous with respect to Lebesgue measure $ds$.
It is a well-known fact (see e.g. \cite[Chap.~II, Sect.~4b]{MR92}), that $(U_t^{\beta})_{t > 0}$ is also a $C_0$-semigroup of contractions on $\mathcal{H}$ which obviously can be restricted to a $C_0$-semigroup on $\mathcal{V}$ (again consisting of contractions).

\begin{prp}\label{Prop2.4}
 The generator $(\Lambda, D (\Lambda, \mathcal{H}))$ of $(U_t^{\beta})_{t > 0}$ (on $\mathcal{H}$) is given as follows
 $$D (\Lambda, \mathcal{H}) = \{ u \in \mathcal{H} \mid r \mapsto |r|^{\beta}  \hat{u}(r) \in L^2 (\R; H)\},$$
 $$(\Lambda u)^\wedge(r) = - (i r)^{\beta} \hat{u} (r), \ r \in \R,$$
 where $\hat{u}$ denotes the Fourier transform of $u$ and is considered as a function from $\R$ to $H$. In particular, if $\beta > \frac{1}{2}$, then each
 $u \in D (\Lambda, \mathcal{H})$ has a $dt$-version $\tilde{u}$ such that $[0, \infty) \ni t \mapsto \tilde{u} (t) \in H$ is continuous and $\tilde{u} (0) = 0$.
\end{prp}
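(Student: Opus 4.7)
The plan is to diagonalize $U_t^\beta$ via the Fourier transform. Embed $\mathcal{H}$ into $L^2(\R; H)$ by extending each $u$ by zero to $(-\infty, 0)$; then $\mathcal{H}$ becomes a closed subspace, the Fourier transform is unitary by Plancherel, and $U_t^\beta$ obviously preserves $\mathcal{H}$. For $u \in \mathcal{H}$ so extended, $U_s$ acts as the ordinary translation $U_s u(\cdot) = u(\cdot - s)$ on $\R$, giving $(U_s u)^\wedge(r) = e^{irs}\hat u(r)$ in the Fourier convention used in the paper. Inserting this into \eqref{eq:th2.3-2} and applying Fubini (legitimate because $\eta_t^\beta$ is a probability measure and $\hat u \in L^2$) together with the stated formula for $\hat\eta_t^\beta$ yields
\begin{equation*}
  (U_t^\beta u)^\wedge(r) = \hat u(r)\int_0^\infty e^{irs}\,\eta_t^\beta(ds) = e^{-t(ir)^\beta}\hat u(r),\qquad r \in \R.
\end{equation*}

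By \eqref{eq:th2.3-3} one has $|(ir)^\beta| = |r|^\beta$ and $\mathrm{Re}\,(ir)^\beta = \cos(\beta\pi/2)|r|^\beta \geq 0$ for $\beta \in (0,1)$, so $m_t(r) := e^{-t(ir)^\beta}$ is a measurable semigroup of contractive multipliers on $L^2(\R; H)$. The standard characterization of generators of multiplier semigroups on $L^2$ then identifies the generator of $(U_t^\beta)$ on $L^2(\R; H)$ with multiplication by $-(ir)^\beta$ on the Fourier side, having maximal domain $\{u \in L^2(\R; H) \mid (ir)^\beta \hat u \in L^2\}$. Since $\mathcal{H}$ is $U_t^\beta$-invariant and closed, the generator $\Lambda$ on $\mathcal{H}$ is the part of this multiplier operator in $\mathcal{H}$; moreover, for any $u \in \mathcal{H}$ satisfying the Fourier condition, the difference quotients $(U_t^\beta u - u)/t$ lie in the closed subspace $\mathcal{H}$, so their strong limit $\Lambda u$ does as well. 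Using $|(ir)^\beta|=|r|^\beta$ this gives exactly the description of $D(\Lambda, \mathcal{H})$ and the formula for $(\Lambda u)^\wedge$ stated in the proposition.

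Finally, when $\beta > 1/2$ and $u \in D(\Lambda, \mathcal{H})$, both $\hat u$ and $|r|^\beta \hat u$ lie in $L^2(\R; H)$, so Cauchy--Schwarz gives
\begin{equation*}
  \int_\R \|\hat u(r)\|_H\,dr \leq \left(\int_\R \frac{dr}{1+|r|^{2\beta}}\right)^{1/2}\left(\int_\R (1+|r|^{2\beta})\|\hat u(r)\|_H^2\,dr\right)^{1/2},
\end{equation*}
whose first factor is finite precisely because $2\beta > 1$. Hence $\hat u \in L^1(\R; H)$ and Fourier inversion furnishes a continuous $H$-valued representative $\tilde u$ on $\R$; since $u \equiv 0$ a.e.\ on $(-\infty, 0)$, continuity forces $\tilde u \equiv 0$ on $(-\infty, 0]$, giving in particular $\tilde u(0) = 0$. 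The only real obstacle I foresee is the bookkeeping around the branch of $(ir)^\beta$ and the verification that the $\mathcal{H}$-generator really is the part of the $L^2(\R; H)$-multiplier operator in $\mathcal{H}$; once \eqref{eq:th2.3-3} is used to pin down $|(ir)^\beta|=|r|^\beta$ and $\mathrm{Re}(ir)^\beta \geq 0$, both the domain identification and the Sobolev-type Cauchy--Schwarz estimate are routine.
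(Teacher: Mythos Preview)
Your proof is correct and shares the essential Fourier diagonalization with the paper's argument, including the key identity $(U_t^\beta u)^\wedge(r)=e^{-t(ir)^\beta}\hat u(r)$. The two proofs diverge in how they finish. For the domain identification, the paper works entirely inside $\mathcal H$: it shows by dominated convergence (using the explicit bound $\frac{1}{t}|e^{-t(ir)^\beta}-1|\le 2|r|^\beta$) that the set $D=\{u\in\mathcal H:|r|^\beta\hat u\in L^2\}$ lies in $D(\Lambda,\mathcal H)$, then proves $D$ is invariant under $U_t^\beta$ and that $(\Lambda,D)$ is closed, and concludes $D=D(\Lambda,\mathcal H)$ via the core criterion of Reed--Simon, Theorem~X.49. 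You instead pass to the ambient space $L^2(\R;H)$, identify the generator there as the Fourier multiplier $-(ir)^\beta$ with maximal domain, and then use that the generator on the closed invariant subspace $\mathcal H$ is the part of the ambient generator; your remark that the difference quotients stay in $\mathcal H$ is exactly what is needed to see $D(A)\cap\mathcal H\subset D(\Lambda,\mathcal H)$. For the continuity claim, the paper simply cites the Sobolev embedding $H^\beta(\R;H)\subset C(\R;H)$ for $\beta>\tfrac12$, whereas you give the elementary Cauchy--Schwarz estimate showing $\hat u\in L^1(\R;H)$ and invoke Fourier inversion. Your route is a bit more conceptual for the domain (at the price of the part-of-generator lemma) and more elementary for continuity; the paper's route is self-contained for the domain and gets H\"older continuity as a bonus.
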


\begin{proof}
 Below we consider each $\eta^{\beta}_t (ds)$ as a measure on all of $\R$, by defining
 \begin{align*}
  \eta_t^{\beta} (A) := \eta_t^{\beta} (A \cap [0, \infty)), \ A \in \mathcal{B} (\R),
 \end{align*}
 and any function $f$ in $\mathcal{H}$ as a function on $\R$ by defining $f := 0$ on $(- \infty, 0)$.

 Let $D := \{ u \in \mathcal{H} \mid r \mapsto |r|^{\beta} \hat{u} (r) \in L^2 (\R ; H)\}$.
 Then  for $u \in D$ we have
 \begin{align*}
   \frac{1}{t} (U_t^{\beta} u - u)^\wedge (r)  & = \frac{1}{t} \Big( \int_{0}^{\infty} U_s u \ \eta_t^{\beta} (ds) - u \Big)^\wedge (r)\\
   & = \frac{1}{t} \Big( \int_{\R} \mathbbm{1}_{[0, \infty)} (\cdot - s) u (\cdot - s) \ \eta_t^{\beta} (ds) - u \Big)^\wedge (r)\\
   & = \frac{1}{t} ( u * \eta_t^{\beta} - u)^\wedge (r)\\
   & = \frac{1}{t} \hat{u} (r) \Big( e^{-t (ir)^{\beta}} - 1 \Big) \xrightarrow[t \to 0]{} - (ir)^{\beta} \hat{u} (r)
  \end{align*}
 for $dr$-a.e. $r \in \R$. But since  for all $r \in \R$ and $t>0$,
 \begin{align*}
  \frac{1}{t} \Big| e^{-t (ir)^{\beta}} - 1 \Big|  \leq 2 \vert r \vert^{\beta},
 \end{align*}
 the last convergence also holds in $\mathcal{H}$. Hence $D \subset D (\Lambda, \mathcal{H})$ and
 \begin{equation}\label{eq:th2.3-4}
  (\Lambda u)^\wedge (r) = - (ir)^{\beta} \hat{u} (r), \ r \in \R.
\end{equation}
 Similarly, one checks that
 \begin{equation}\label{eq:th2.3-5}
  U_t^{\beta} D \subseteq  D  \quad \forall t > 0,
 \end{equation}
 and that $(\Lambda, D)$ is closed as an operator from $\mathcal{H}$ to $\mathcal{H}$.
 Since $D$ is dense in $\mathcal{H}$, \eqref{eq:th2.3-5} implies (see \cite[Theorem X.49]{RS75})
 that $D$ is an operator core of $(\Lambda, D (\Lambda, \mathcal{H}))$. Consequently,
 $D = D (\Lambda, \mathcal{H})$ and $\Lambda$ is given by \eqref{eq:th2.3-4}. Furthermore, we note that for all $\beta > 0$
 \begin{equation*}
  D ( \Lambda, \mathcal{H}) \subset H^{\beta} (\R; H)
 \end{equation*}
 by definition of the fractional Sobolev space $H^{\beta} (\R; H) (\subset L^2 (\R; H))$,
 which consists of (H\"older-) continuous functions on $\R$ if $\beta > \frac{1}{2}$. Therefore, if $\beta > \frac{1}{2}$, every $u \in D (\Lambda, \mathcal{H})$
 has a continuous version $\tilde{u}$ on $\R$ which is zero on $(- \infty, 0)$, hence $\tilde{u} (0) = 0$.
\end{proof}

Next we shall prove a representation formula for $\Lambda$ in terms of the Caputo derivative $\partial_t^\beta$.
To this end, we first define a domain for $\partial_t^\beta$ which is convenient in our case.
Let for $\beta \in (0,1)$
\begin{equation}\label{eq:2.9'}
D(\partial_t^\beta) := \{ u \in L^1([0,\infty); V^*) \mid g_{1-\beta} * u \in W^{1,1}([0,T],V^*) \text{ for all } T \in (0, \infty)\},
\end{equation}
where we define for $u \in L^1([0,\infty); V^*)$
$$ (g_{1-\beta} * u) (t) := \frac{1}{\Gamma(1-\beta)} \int_0^t (t-s)^{-\beta}  u(s) \, ds.$$
Obviously, for all $T \in (0, \infty)$
$$ g_{1-\beta} * u = (\mathbbm{1}_{[0,T]} g_{1-\beta}) * (\mathbbm{1}_{[0,\infty)} u) \quad \text{ on } [0,T],$$
where  for $p \in [1, \infty)$ the latter function belongs to $L^p(\R;V^*)$ if so does $\mathbbm{1}_{[0,\infty)} u$,
and is in $C(\R; V^*)$,  if in addition $\beta \in (0, \frac{p-1}{p})$.

\begin{lem}\label{lem2.5}
Let $u \in D_0 := D(\Lambda, \mathcal H) \cap \V \cap D(\partial_t^\beta) \cap L^\infty([0, \infty);H)$.
Then
\begin{equation}\label{eq:2.5-1}
  \Lambda u = - \frac{d}{dt} (g_{1-\beta} * u)
\end{equation}
and $D_0$ is an operator core for $(\Lambda, D(\Lambda, \mathcal H))$.
\end{lem}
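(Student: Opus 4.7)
The proof splits into the representation identity \eqref{eq:2.5-1} and the core property. My approach is Fourier-analytic for the identity and a Yosida-type approximation for the core property.

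For the identity, I would extend $u$ by zero to $\R$ and compare Fourier transforms as $V^*$-valued tempered distributions. By Proposition~\ref{Prop2.4}, $(\Lambda u)^\wedge(r) = -(ir)^\beta \hat u(r)$. Since $g_{1-\beta}$ has Laplace transform $\lambda^{\beta-1}$ on $\{\operatorname{Re}\lambda > 0\}$, its distributional Fourier transform is $(ir)^{\beta-1}$ in the branch consistent with \eqref{eq:th2.3-3}, and the convolution identity gives $(g_{1-\beta}*u)^\wedge(r) = (ir)^{\beta-1}\hat u(r)$. The hypothesis $u \in L^\infty([0,\infty);H)$ together with $\beta<1$ implies $\|(g_{1-\beta}*u)(t)\|_H \le \|u\|_{L^\infty} t^{1-\beta}/\Gamma(2-\beta)$, so $(g_{1-\beta}*u)(0^+)=0$ in $H$ and distributional differentiation on $\R$ introduces no boundary term. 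Therefore $\widehat{\tfrac{d}{dt}(g_{1-\beta}*u)}(r) = (ir)(ir)^{\beta-1}\hat u(r) = (ir)^\beta \hat u(r) = -(\Lambda u)^\wedge(r)$, yielding \eqref{eq:2.5-1}. The equality holds first as $V^*$-valued distributions and then in $\mathcal H$, since $\Lambda u \in \mathcal H$.

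For the core property, given $u \in D(\Lambda,\mathcal H)$ I would construct approximants in $D_0$ in two steps. Step~1: pick $\tilde u_k \in \mathcal V \cap L^\infty([0,\infty);H)$ with $\tilde u_k \to u$ in $\mathcal H$, which is possible since $\mathcal V$ is dense in $\mathcal H$ and one may additionally truncate pointwise in the $H$-norm. Step~2: apply the Yosida regularization $u_{n,k} := n V_n \tilde u_k = \int_0^\infty n e^{-ns} U_s^\beta \tilde u_k \, ds$, with $V_n = (n-\Lambda)^{-1}$. Since $U_s^\beta$ restricts to a $C_0$-semigroup on $\mathcal V$ and acts as a contraction on $L^\infty([0,\infty);H)$ (by Jensen's inequality for the probability measure $\eta_s^\beta$), $u_{n,k}$ lies in $D(\Lambda,\mathcal H) \cap \mathcal V \cap L^\infty([0,\infty);H)$. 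Its membership in $D(\partial_t^\beta)$ then follows from the Fourier identity of Part~1: it yields $\tfrac{d}{dt}(g_{1-\beta}*u_{n,k}) = -\Lambda u_{n,k} \in \mathcal H$, so $g_{1-\beta}*u_{n,k} \in W^{1,2}_{\mathrm{loc}}([0,T];H) \subset W^{1,1}_{\mathrm{loc}}([0,T];V^*)$. Hence $u_{n,k}\in D_0$, and a diagonal choice $k = k(n) \to \infty$ produces an approximating sequence, combining $u_{n,k} \to u_n := nV_n u$ in graph norm as $k \to \infty$ (by continuity of $nV_n$ and of $\Lambda nV_n$ on $\mathcal H$) with $u_n \to u$ in graph norm by standard Yosida theory.

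The main technical obstacle is Part~1: making the Fourier manipulations rigorous given that $g_{1-\beta}$ is not integrable, that the two sides of \eqref{eq:2.5-1} a priori live in different spaces ($\mathcal H$ versus $V^*$-valued $L^1_{\mathrm{loc}}$), and that the branch of $(ir)^\beta$ implicit in \eqref{eq:th2.3-3} must be tracked consistently. Once the identity is established, the core property reduces to the density argument above.
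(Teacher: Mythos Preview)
Your approach is correct in outline but takes a genuinely different route from the paper in both halves.

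For the identity \eqref{eq:2.5-1}, the paper uses \emph{Laplace} transforms rather than Fourier transforms: it computes $\mathcal L\big(\tfrac{d}{dt}(g_{1-\beta}*u)\big)(\lambda)=\lambda^\beta\,\mathcal L u(\lambda)$ by an elementary integration-by-parts plus Fubini argument (valid classically for $\lambda>0$, using the bound $\|(g_{1-\beta}*u)(t)\|_H\lesssim t^{1-\beta}$), and separately computes $\mathcal L(\Lambda u)(\lambda)=-\lambda^\beta\,\mathcal L u(\lambda)$ directly from the semigroup formula and $\mathcal L(\eta_t^\beta)(\lambda)=e^{-t\lambda^\beta}$. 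This sidesteps precisely the obstacle you flag: there is no need to interpret the Fourier transform of the non-integrable kernel $g_{1-\beta}$ distributionally or to track the branch of $(ir)^\beta$, since everything is real and absolutely convergent for $\lambda>0$. Your Fourier route works, but you would have to justify $(g_{1-\beta}*u)^\wedge=(ir)^{\beta-1}\hat u$ as tempered $H$-valued distributions, which is the most delicate step.

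For the core property, the paper does not construct explicit approximants. Instead it checks that $U_t^\beta(D_0)\subset D_0$ (here the absolute continuity of $\eta_t^\beta$ with respect to Lebesgue measure is used) and that $D_0$ is dense in $\mathcal H$, then invokes the standard result that a dense, semigroup-invariant subspace of the domain is a core (Reed--Simon, Theorem X.49). Your Yosida/diagonal construction also works, but note a small expositional circularity: to place $u_{n,k}=nV_n\tilde u_k$ into $D(\partial_t^\beta)$ you invoke ``the Fourier identity of Part~1,'' whose hypothesis already includes $D(\partial_t^\beta)$. The fix is easy---your Fourier computation actually only uses $u\in D(\Lambda,\mathcal H)\cap L^\infty$, and then the fact that the distributional derivative equals $-\Lambda u\in\mathcal H$ \emph{forces} $g_{1-\beta}*u\in W^{1,2}_{\mathrm{loc}}$---but you should state Part~1 in that slightly stronger form if you want to use it this way.
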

\begin{proof}
First we note that for $T \in [0, \infty)$
\begin{equation}\label{eq:2.5-2}
\| (g_{1-\beta} * u) (t) \|_{V^*} \leq \frac{\esssup\limits_{s \in [0,T]}
\|u(s)\|_{V^*}}{\Gamma(1-\beta)(1-\beta)} t^{1-\beta} \quad \text{ for $dt$-a.e. } t \in [0, T],
\end{equation}
and the same inequality holds with $\|\cdot\|_H$ replacing $\|\cdot\|_{V^*}$.

From now on we consider all appearing functions, originally only defined on $[0, \infty)$, as functions on all of $\R$
by defining them to be equal to zero on $\R \setminus [0, \infty)$.
As in the proof of Proposition \ref{Prop2.4}, one can check that $U_t^\beta (D_0) \subset D_0$ (for which, however, it is essential that
$\eta_t^\beta(ds)$ is absolutely continuous with respect to $ds$) and that $D_0$ is dense in $\mathcal H$.
Again applying Theorem X.49 from \cite{RS75} we obtain that $D_0$ is an operator core of $(\Lambda, D(\Lambda, \mathcal H))$.
Hence it remains to prove \eqref{eq:2.5-1}.

Let us start with calculating the Laplace transform $\mathcal L$ of the right hand side of \eqref{eq:2.5-1} for any $u \in D(\partial_t^\beta) \cap L^\infty([0, \infty);V^*)$.
So let $\lambda \in (0, \infty)$.
Then integrating by parts, using \eqref{eq:2.5-2} and Fubini's Theorem we obtain
\begin{align*}
\mathcal L \left( \frac{d}{dt} (g_{1-\beta} * u) \right) (\lambda) &= \int_0^\infty e^{-\lambda t} \frac{d}{dt} (g_{1-\beta} * u) (t) \, dt
\\ &= \lim\limits_{T \rightarrow \infty} \left( e^{-\lambda T} (g_{1-\beta} * u) (T) + \lambda \int_0^T e^{-\lambda t} (g_{1-\beta} * u) (t) \, dt \right)
\\ &= \frac{\lambda}{\Gamma(1-\beta)} \int_0^\infty e^{-\lambda t} \int_0^t (t-s)^{-\beta} u(s) \, ds \, dt
\\ &= \frac{\lambda}{\Gamma(1-\beta)} \int_0^\infty \int_s^\infty (t-s)^{-\beta} e^{-\lambda t} \, dt \, u(s) \,ds
\\ &= \frac{\lambda^\beta}{\Gamma(1-\beta)} \int_0^\infty e^{-\lambda s} u(s) \,ds \, \lambda \int_0^\infty (\lambda t)^{-\beta} e^{-\lambda t} \, dt
\\ &= \lambda^\beta \mathcal L u (\lambda), \quad \lambda \in (0, \infty).
\end{align*}
For the left-hand side of \eqref{eq:2.5-1} and $u\in D_0$ we find for all $h \in H$, $\lambda \in (0, \infty)$, since $\mathcal{L} (\eta_t^{\beta}) (\lambda) = e^{- t \lambda^{\beta}}$
\begin{align*}
\left\langle \int_{0}^{\infty} \Lambda u (r) e^{- \lambda r} dr, h \right\rangle_H &= \lim \limits_{t \to 0} \frac{1}{t} \int_{0}^{\infty} \left\langle U^{\beta}_t  u(r) - u(r), h \right\rangle_H e^{- \lambda r} dr \\
 & = \lim\limits_{t \to 0} \frac{1}{t} \Big(\mathcal{L} ( \langle u,h\rangle_H * \eta_t^{\beta}) - \mathcal{L} ( \langle u,h\rangle_H) \Big) (\lambda) \\
 & = \lim\limits_{t \to 0} \frac{1}{t} (e^{-t \lambda^\beta} -1) \, \mathcal{L} ( \langle u,h\rangle_H) (\lambda)\\
 & = - \lambda^{\beta} \left\langle \mathcal{L} u (\lambda), h \right\rangle_H.
\end{align*}
Hence, $\mathcal{L} (\Lambda (u)) (\lambda) = - \lambda^{\beta} \mathcal{L} (u) (\lambda)$ and \eqref{eq:2.5-1} follows.
\end{proof}

\begin{prp}\label{Prop2.6}
 Let $u \in \mathcal{F}$. Then $u \in D (\partial_t^{\beta})$ and
 \begin{align*}
  \Lambda u = - \frac{d}{dt} (g_{1-\beta} * u).
 \end{align*}
 In particular, $g_{1-\beta} * u \in C ([0, \infty), H)$.
 \end{prp}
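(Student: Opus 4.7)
The strategy is to extend the identity $\Lambda u = -\frac{d}{dt}(g_{1-\beta}*u)$ of Lemma \ref{lem2.5} from the core $D_0$ to all of $\mathcal{F}$ via two successive closure arguments, exploiting the continuity of convolution with $g_{1-\beta}$ on Bochner $L^p$-spaces. First, I would extend the identity from $D_0$ to all of $D(\Lambda,\mathcal{H}) \cap \mathcal{V}$: for $v \in D(\Lambda,\mathcal{H}) \cap \mathcal{V}$, the core property of $D_0$ from Lemma \ref{lem2.5} yields $v_k \in D_0$ with $v_k \to v$ and $\Lambda v_k \to \Lambda v$ in $\mathcal{H}$; since $g_{1-\beta}$ is locally integrable, Young's convolution inequality gives $g_{1-\beta}*v_k \to g_{1-\beta}*v$ in $L^2_{loc}([0,\infty); H) \subset L^1_{loc}([0,\infty); V^*)$, and passing to the limit in the distributional identity $(g_{1-\beta}*v_k)' = -\Lambda v_k$ yields $(g_{1-\beta}*v)' = -\Lambda v$.

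Given $u \in \mathcal{F}$, I then pick $(u_n) \subset D(\Lambda,\mathcal{H}) \cap \mathcal{V}$ with $u_n \to u$ in $\mathcal{V}$ and $\Lambda u_n \to \Lambda u$ in $\mathcal{V}^*$. Young's inequality, applied separately to the two factors of $\mathcal{V} = L^\alpha([0,\infty);V) \cap L^2([0,\infty);H)$ with the locally integrable kernel $g_{1-\beta}$, gives
$$g_{1-\beta}*u_n \to g_{1-\beta}*u \ \text{ in } \ L^\alpha_{loc}([0,\infty); V) \cap L^2_{loc}([0,\infty); H) \subset L^1_{loc}([0,\infty); V^*),$$
while on every compact interval $\Lambda u_n \to \Lambda u$ in $\mathcal{V}^* \subset L^1_{loc}([0,\infty); V^*)$. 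Testing the identity from the previous step against $\varphi \in C_c^\infty((0,T); V)$ and passing to the limit produces
$$\int_0^T {}_{V^*}\<g_{1-\beta}*u,\varphi'\>_V\, dt = \int_0^T {}_{V^*}\<\Lambda u, \varphi\>_V\, dt,$$
so $\frac{d}{dt}(g_{1-\beta}*u) = -\Lambda u$ in $\mathcal{D}'((0,\infty); V^*)$. Since $\Lambda u \in \mathcal{V}^* \subset L^1_{loc}([0,\infty); V^*)$, this weak derivative is locally integrable, hence $g_{1-\beta}*u \in W^{1,1}([0,T]; V^*)$ for every $T > 0$, which proves $u \in D(\partial_t^\beta)$ together with the stated formula.

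For the final claim $g_{1-\beta}*u \in C([0,\infty); H)$, observe that $g_{1-\beta}*u$ lies in $L^\alpha_{loc}([0,\infty); V) \cap L^2_{loc}([0,\infty); H)$ and its derivative in $\mathcal{V}^*$, so a Lions--Magenes-type embedding (cf.~\cite{Sh97, LR15}) supplies an $H$-continuous representative on each compact subinterval of $[0,\infty)$. I expect this to be the main technical hurdle: the distributional identity gives only $V^*$-continuity directly, and upgrading to $H$-continuity requires the Gelfand triple chain rule $\frac{d}{dt}\|v(t)\|_H^2 = 2\, {}_{V^*}\<v'(t), v(t)\>_V$, applied either to $v = g_{1-\beta}*u$ once the requisite regularity is verified, or to the differences $g_{1-\beta}*(u_n - u_m)$ to deduce that the approximating sequence is Cauchy in $C([0,T]; H)$ from the $\mathcal{F}$-norm bounds.
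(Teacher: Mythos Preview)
Your proposal is correct and follows essentially the same strategy as the paper: approximate $u\in\mathcal{F}$ by elements on which Lemma~\ref{lem2.5} gives the identity, use Young's inequality to pass the convolution $g_{1-\beta}*\,\cdot\,$ through the limit, and then invoke a Lions--Magenes-type embedding for the $H$-continuity. The paper does this in a single approximation step (claiming $u_n\in D_0$ with $u_n\to u$ in $\mathcal{V}$ and $\Lambda u_n\to\Lambda u$ in $\mathcal{V}^*$ directly from Lemma~\ref{lem2.5}) and cites \cite[Theorem~1.19]{Ba10} for the continuity; your two-step approximation via $D(\Lambda,\mathcal H)\cap\mathcal V$ is a cleaner justification of why one may approximate from $D_0$ at all, since Lemma~\ref{lem2.5} literally only gives $\mathcal H$-graph-norm density, whereas $\mathcal F$ is defined as the $\mathcal V$--$\mathcal V^*$ closure from $D(\Lambda,\mathcal H)\cap\mathcal V$.
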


 \begin{proof}
  By Lemma \ref{lem2.5} there exist $u_n \in D_0$, $n \in \mathbb N$, such that as $n \to \infty$
\begin{align*}
   u_n &\longrightarrow u \text{ in } \mathcal{V}
\intertext{and}
  - \partial_t^{\beta} u_n = \Lambda u_n &\longrightarrow \Lambda u \text{ in } \mathcal{V}^*.
\end{align*}
Let $T\in (0, \infty)$. Obviously, $g_{1-\beta} * u_n \longrightarrow g_{1-\beta} * u$ in $L^{\alpha} ([0, T]; V)$ as $ n \to \infty$.
Therefore, by completeness $g_{1-\beta} * u \in W^{1, \frac{\alpha}{\alpha-1}} ([0, T]; \mathcal{V}^*)$ and
\begin{align*}
   \Lambda u = -\frac{d}{dt} (g_{1-\beta} * u) \ \text{on} \  [0, T].
\end{align*}
The last part of the assertion then follows by \cite[Theorem 1.19, pp.25]{Ba10}.
 \end{proof}

\textbf{Proof of Theorem \ref{Th1}:} Existence:
Let $\varphi$ be as in the assertion of the theorem. Define $\mathcal{A}_x$ as $\mathcal{A}$, but with
\begin{align*}
 A_x (t, v) := A (t, v + x \varphi (t)), \ t > 0, \ v \in V,
\end{align*}
replacing $A$. Then by Theorem \ref{T1} and Proposition \ref{Prop2.6} there exist $u_x \in \mathcal{F}$ such that
\begin{align*}
 \frac{d}{dt} (g_{1-\beta} * u_x) + A_x u_x = f
\end{align*}
Define $u := u_x + x \varphi$. Then $u - x \varphi (= u_x)$ satisfies \eqref{eq:th1-1} and
\begin{align*}
 \partial^{\beta}_t (u - x) + \mathcal{A} u = f \quad dt \text{-a.e.~on }  (0, T)
\end{align*}
and \eqref{1.1} is solved.
The rest assertions of the Theorem follows from the last part of the assertion of Proposition \ref{Prop2.6} and an elementary fact about convolutions.

Uniqueness: the uniqueness proof is similar to the argument in \cite[Theorem 3.1]{Za09}.
So, we only give a brief account of the proof here.

Recall that Riemann--Liouville fractional integral $I_t^\beta f:=g_\beta \ast f$ and  the definition of the Riemann--Liouville kernel
$$   g_\beta(t)=\frac{t^{\beta-1}}{\Gamma(\beta)},  \  t>0.  $$
Suppose that $u_1, u_2$ are solutions to (\ref{1.1}), then $u:=u_1-u_2$ is a solution to the following equation
$$ \partial_t^\beta u + A(t,u_1)-A(t,u_2)=0, \  u(0)=0.  $$
Let $g^n\in W^{1,1}, n\in \mathcal{N}$ be the kernel associated with the Yosida approximation of the operator $ \partial_t^\beta$ (cf.\cite{VZ08,VZ15}).
Then by \cite[Theorem 2.1]{VZ08} we have
\begin{equation}\label{2.1}
 \begin{split}
  &  \frac{d}{dt}(g^n \ast \|u(\cdot)\|_H^2)(t) \le 2 \<  \frac{d}{dt}(g^n \ast u)(t) , u(t)\>_H \\
  = & 2~ { }_{V^*}\<  \frac{d}{dt}(g_{1-\beta} \ast u)(t) , u(t)\>_V + 2~  { }_{V^*}\<  \frac{d}{dt}(g^n \ast u)(t)- \frac{d}{dt}(g_{1-\beta} \ast u)(t) , u(t)\>_V \\
  = & 2~ { }_{V^*}\<  A(t,u_1(t)) -A(t,u_2(t)), u(t)\>_V + 2~  { }_{V^*}\<  \frac{d}{dt}(g^n \ast u)(t)- \frac{d}{dt}(g_{1-\beta} \ast u)(t) , u(t)\>_V \\
  \le &  2~  { }_{V^*}\<  \frac{d}{dt}(g^n \ast u)(t)- \frac{d}{dt}(g_{1-\beta} \ast u)(t) , u(t)\>_V=: 2h^n(t).
\end{split}
\end{equation}
Note that $h^n\rightarrow 0$ in $L^1([0, T])$, which yields that $g_\beta \ast h^n \rightarrow 0$ in $L^1([0, T])$ (cf.~\cite{VZ08}).
Moreover, we have
$$  g_\beta \ast  \frac{d}{dt}(g^n \ast \|u(\cdot)\|_H^2)=  \frac{d}{dt}(g^n \ast g_\beta \ast \|u(\cdot)\|_H^2)  \rightarrow
 \frac{d}{dt}(g_{1-\beta} \ast g_\beta \ast \|u(\cdot)\|_H^2) = \|u(\cdot)\|_H^2 $$
 in   $L^1([0, T])$ as $n\rightarrow\infty$.

Hence, by applying convolution with the kernel $g_\beta$ to (\ref{2.1}) we have
$$   \|u(t)\|_H^2 \le 0,  \ a.e. t\in [0, T],  $$
which implies that $u_1=u_2$, i.e. the solution to   (\ref{1.1}) is unique.  \qed

Suppose that $U$ is a Hilbert space and
$W(t)$  is  a $U$-valued cylindrical Wiener process  defined on a
filtered probability space $(\Omega,\mathcal{F},\mathcal{F}_t,\mathbb{P})$.
Now we consider stochastic nonlinear evolution equations with fractional time derivative of  type
\begin{equation}\label{SPDE}
 \partial_t^{\beta}(X(t)-x)+A(t,X(t))=\partial_t^\gamma \int_0^t B(s) dW(s),  \ 0<t<T,  
\end{equation}
where $\gamma \in (0,1]$, $B: [0,T]\rightarrow L_{HS}(U; H)$ is measurable (here $(L_{HS}(U; H), \|\cdot\|_{HS})$
denotes the space of all Hilbert-Schmidt operators from $U$ to $H$).

We want to explain the reason why this form of random noise term is used in (\ref{SPDE}). In fact, heuristically
 \begin{equation}\label{SPDE1}
 \partial_t^\gamma \int_0^t B(s) dW(s)=I_t^{1-\gamma}[B(t)dW(t)]
\end{equation}
which can be used to model systems subject to classical random noise (the case $\gamma=1$) or random noise with certain
memory effects (the case $\gamma< 1$). This form of noise will naturally appear if $e.g.$ you consider the heat transfer
with random effects with memory (see \cite{CKK}) or use the time fractional Duhamel's principle to derive the appropriate form of
stochastic time-fractional diffusion equations (see \cite{MN15} for the case $\gamma=\beta$). The case $\gamma=\beta$ is investigated
in various papers about stochastic heat type or wave type equations, see e.g. \cite{C14,CHN,FN15,MN15,MN16}, and the case $\gamma=1$ is
studied in \cite{CHHH,HH}.

In this work we consider the general case $\gamma<\beta+\frac{1}{2}$, this assumption is natural since the stochastic integral term in (\ref{SPDE1}) is well-defined if and only if ($e.g.$ assuming $B(\cdot)$ is bounded)
$$  \int_0^t (t-s)^{2(\beta-\gamma)}\|B(s)\|_{HS}^2 ds \le C  \int_0^t (t-s)^{2(\beta-\gamma)} ds < \infty, $$
which is equivalent to $\gamma<\beta+\frac{1}{2}$. The same assumption $\gamma<\beta+\frac{1}{2}$ is also used in \cite{CKK} which is deduced there by a differentiability  argument.

For a concise formulation of our result we define for $t\in [0, \infty)$
$$   F(t):= \frac{1}{\Gamma(1+\beta-\gamma)} \int_0^t (t-s)^{\beta-\gamma}B(s) dW(s). $$
\begin{thm}\label{Th2}
Suppose that $\gamma\in(0,1]$, $T\in [0,\infty)$ and that $A$ satisfies $(H1)$-$(H4)$,
$B\in L^\infty([0,T], L_{HS}(U; H))$ if $\gamma<\beta+\frac{1}{2}$ or $B\in L^2([0,T], L_{HS}(U; H))$ if $\gamma\le \beta$.
Assume also that $F\in V, dt\otimes \mathbb{P}$-$a.e.$ (which is $e.g.$ the case if $B(t)$ is a Radonifying map from $U$ to $V$).
Then for every $x\in V$ (\ref{SPDE}) has a unique $\mathcal{F}_t$-adapted solution $X$ such that $X-F-x\varphi \in \mathcal F$, $\mathbb P$-a.s.~for
every $\varphi \in L^\alpha([0,\infty) ; \R)$ with $\varphi \equiv 1$ on $[0,T+1)$. In particular,
$$  X-F-x\varphi \in L^\alpha([0,\infty); V); \   \partial_t^\beta (X-F-x\varphi)\in L^{\frac{\alpha}{\alpha-1}}([0,\infty); V^*), \  \mathbb{P}\text{-}a.s  $$
and for $dt$-a.e.~$t \in [0, T]$,
\begin{equation*}
 X(t)= x - \frac{1}{\Gamma(\beta)}\int_0^t (t-s)^{\beta-1} A(s,X(s))\, ds +  \frac{1}{\Gamma(1+\beta-\gamma)} \int_0^t (t-s)^{\beta-\gamma} B(s) d W(s),
 \ \mathbb{P}\text{-}a.s.
\end{equation*}

Furthermore, $t \mapsto \frac{1}{\Gamma(1-\beta)} \int_0^t (t-s)^{-\beta} (X(s) - x \varphi(s)) \, ds$ and $F$  have continuous $H$-valued $dt$-versions,
and $t \mapsto X(t)$ has a  continuous $V^*$-valued $dt$-version if $\beta \in (\frac{\alpha-1}{\alpha}, 1)$.
\end{thm}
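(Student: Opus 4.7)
\textbf{Proof plan for Theorem \ref{Th2}.} My strategy is to reduce the stochastic equation to the deterministic one treated in Theorem \ref{Th1} by subtracting the stochastic convolution $F$ and then solving the resulting random deterministic equation pathwise. The first step is to establish rigorously the identity
$$ \partial_t^{\beta} F = \partial_t^{\gamma} \int_0^{\cdot} B(s)\, dW(s) $$
in the Riemann--Liouville sense. Writing $F = g_{1+\beta-\gamma} * (B\,dW)$ (stochastic convolution) and using the semigroup property $g_{a}*g_{b} = g_{a+b}$ together with $\partial_t^{\beta} g_{1+\beta-\gamma} = g_{1-\gamma}$, this identity follows from a stochastic Fubini argument and the regularity assumptions on $B$, which give $F \in C([0,T];H)$ (and via the extra assumption, $F \in V$ for $dt \otimes \mathbb P$-a.e.~$(t,\omega)$). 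The condition $\gamma < \beta + \frac12$ is exactly what makes $F$ well-defined as an $H$-valued process with the requisite integrability of sample paths.

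Next, I set $Y(t,\omega) := X(t,\omega) - F(t,\omega)$ and observe that, formally,
$$ \partial_t^{\beta} (Y - x) + A(t, Y + F) = \partial_t^{\beta}(X-x) - \partial_t^{\beta} F + A(t,X) = 0, $$
so that for each fixed $\omega$, the process $Y$ must solve the random deterministic equation
$$ \partial_t^{\beta}(Y - x) + \widetilde A(t,\omega, Y(t)) = 0, \qquad \widetilde A(t,\omega,v) := A(t, v + F(t,\omega)). $$
I would then verify that $\widetilde A$ satisfies the hypotheses $(H1)$--$(H4)$ $\omega$-wise, with a new (random but finite) coercivity function $\tilde g$. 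Hemicontinuity and monotonicity in $v$ are immediate from those of $A$. For the growth bound and coercivity, the elementary inequalities $\|v+F\|_V^{\alpha-1} \le 2^{\alpha-1}(\|v\|_V^{\alpha-1} + \|F\|_V^{\alpha-1})$ and, via Young's inequality, $C\|v+F\|_V^{\alpha-1} \|F\|_V \le \tfrac{\delta}{2}\|v+F\|_V^{\alpha} + C'\|F\|_V^{\alpha}$, allow one to absorb all $F$-dependent cross terms into a new $g$ of the form $\tilde g(t,\omega) := c(g(t) + \|F(t,\omega)\|_V^{\alpha})$ on $[0,T]$, extended suitably outside (multiplying with a smooth cutoff in $t$ so that the spaces $\mathcal V, \mathcal V^*$ of Theorem \ref{Th1} can be used). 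Because $F \in V$ for a.e.~$(t,\omega)$ and the exponents are matched correctly, one has $\tilde g(\cdot,\omega) \in L^1([0,\infty); \mathbb R_+)$ for $\mathbb P$-a.e.~$\omega$.

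Applying Theorem \ref{Th1} pathwise then yields a unique $Y(\cdot,\omega) \in \mathcal F$ (minus the $x\varphi$ shift) solving the reduced equation; setting $X := Y + F$ produces a solution of \eqref{SPDE} satisfying the regularity and integral identity in the statement (the latter by the last sentence of Theorem \ref{Th1} applied pathwise, together with the stochastic Fubini identity for $F$). Uniqueness is obtained exactly as in the deterministic case: if $X_1, X_2$ are two solutions, then $Y_i := X_i - F$ solve the same pathwise equation with zero initial data and the same coercive monotone operator $\widetilde A$, and the argument at the end of the proof of Theorem \ref{Th1} (convolution with $g^n$ followed by $g_\beta$) yields $\|Y_1 - Y_2\|_H^2 \le 0$ a.e.

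\textbf{Main obstacle.} The delicate point is the adaptedness and joint measurability in $(\omega,t)$ of the pathwise solution, since the existence in Theorem \ref{T1} is obtained through the Yosida--compactness method and supplies no explicit selection of $u_\alpha$. I would circumvent this by noting that the pathwise solution is in fact unique in $\mathcal F$, so any measurable selection is \emph{the} solution; adaptedness then follows because $\widetilde A(t,\omega,\cdot)$ is adapted in $\omega$ (as $F$ is) and because one can obtain the solution on $[0,T]$ by Galerkin approximations with adapted coefficients and pass to the limit using the uniqueness, which forces the full sequence to converge. The second technical subtlety, namely the rigorous justification of $\partial_t^\beta F = \partial_t^\gamma \int_0^{\cdot} B\,dW$ in the sense of \eqref{eq:2.9'}, is handled by first proving it for step functions $B$ via the kernel identity $g_{1-\beta} * g_{1+\beta-\gamma} = g_{2-\gamma}$ and then extending by the isometry provided by the assumption on $B$ and on $\gamma - \beta < 1/2$.
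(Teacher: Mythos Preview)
Your proposal is correct and follows essentially the same route as the paper: subtract the stochastic convolution $F$, set $\widetilde A(t,\omega,v) := A(t, v + F(t,\omega))$, verify $(H1)$--$(H4)$ for $\widetilde A$ pathwise, and invoke Theorem~\ref{Th1}. The paper's proof is in fact terser than yours on two points you flag as obstacles: it handles the identity $\partial_t^\beta F = \partial_t^\gamma \int_0^\cdot B\,dW$ implicitly via the kernel relation $g_a * g_b = g_{a+b}$ (as you do), and for adaptedness it simply refers back to the constructive approximation underlying Theorem~\ref{T1} and Lemma~\ref{L2} rather than appealing to Galerkin approximations, but the spirit is the same.
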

\begin{proof}
Let $u(t)=X(t)-F(t)$, then $u(t)$ satisfies the following equation
\begin{equation}\label{1.2}
 \partial_t^{\beta}(u(t)-x)+A(t,u(t)+F(t))=0,  \ 0<t<T. 
\end{equation}
Define
$$ \tilde{A}(t,u)=A(t,u+F(t)), u\in V. $$
Since $F\in  V$ $dt\otimes\mathbb{P}$-$a.e.$, it is easy to see that $\tilde{A}$ still satisfies $(H1)$-$(H4)$.
Hence (\ref{1.2}) has a unique solution $u$, which implies that $u+F$ is the unique solution to (\ref{SPDE}).

The $\mathcal{F}_t$-adaptiveness of the solution follows by the proofs of Theorem \ref{T1} and Lemma \ref{L2}.
Since $g_\alpha\ast g_\beta=g_{\alpha+\beta}$, it easily follows ($e.g.$ by \cite[Proposition 6.3.3]{LR15}) that
$F$ is $\mathbb{P}$-$a.s.$ continuous in $H$. Hence $X$ is continuous in $V^*$ if $\beta \in (\frac{\alpha-1}{\alpha}, 1)$.
\end{proof}

\begin{rem}\label{rem2} From the above proof one can see that Theorem \ref{Th2} also holds for the case that
$A$ and $B$ are random coefficients ($i.e.$ also depend on $\omega\in\Omega$) in a progressively measurable way
(cf. \cite[Section 4.1]{LR15}). It's also obvious that one could take the initial condition $x$ as some $V$-valued random variable.

\end{rem}

\section{Application to Examples}

In this part we will apply our main result to establish the existence and uniqueness of solutions to (stochastic) porous medium equations and $p$-Laplace equations
with time-fractional derivative.
Both are open problems even in the deterministic case. In the recent work \cite{VZ15},  the authors derive very nice decay estimates for the solution of
those equations (by assuming the existence) and
the decay behaviour turns out to be notably different from the classical parabolic case. As said,
here we give a positive answer to the question concerning the existence and
uniqueness of solutions to the time-fractional porous medium equations and $p$-Laplace equations.

Both examples are just model cases which we present here motivated by \cite{VZ15}.
There are many more examples (see e.g.~\cite{BDR16} and  \cite{RRW07}).
In particular, fast diffusion equations are also covered.
We are confident that we can extend our approach to the case of multi-valued operators as well.

Let $\L \subset \R^d$ be an  open bounded domain and $\Delta$ be the Laplace operator, and for $p\in [1,\infty[$ we use $L^p(\L)$ and  $H_0^{n,p}(\L)$ to denote
 the $p$th-integrable Lebesgue space and the Sobolev space  of order $n$ in
 $L^p(\L)$ with \emph{Dirichlet boundary conditions}. Recall that $X^*$ denotes the dual space of a Banach space $X$.

\subsection{Time-fractional porous medium equation}
We first introduce the porous medium operator $A(u):=\Delta\Psi(u)$.  Let $\Psi : \R \to \R$
be a function having the following properties:
\begin{enumerate}
\item [$(\Psi 1)$] $\Psi$ is continuous.

\item [$(\Psi 2)$] For all $s,t\in \R$
$$
   (t-s)(\Psi (t) - \Psi(s)) \geq 0. $$
\item [$(\Psi 3)$] There exist $p\in [2,\infty[,\, c_1\in\, ]0,\infty[,\, c_2\in [0,\infty[$
such that for all $s\in \R$
$$  s\Psi (s) \geq c_1 |s| ^p -c_2. $$
\item [$(\Psi 4)$] There exist $c_3, c_4 \in\, ]0,\infty[$ such that for all $s\in \R$
$$
   |\Psi(s)|\leq  c_3 |s|^{p-1}+c_4,
$$
where $p$ is as in $(\Psi 3)$.
\end{enumerate}

Now we consider the following Gelfand triple
$$   V: = L^p(\L) \subseteq   H:= (H_0^{1,2}(\L))^*  \subseteq   V^*:=(L^p(\L))^*  $$
and  stochastic time-fractional porous medium equation
 \begin{equation}\label{SPME}
  \partial_t^\beta (X(t)-x)=\Delta \Psi (X(t)) + \partial_t^\gamma \int_0^t B(s) \d W(s), \  0<t<T.
 \end{equation}
\begin{rem}
Since here  $H$ is not taken as $L^2(\L)$, by the definition of the Gelfand triple, one should note that
 $$V^*\neq L^{\frac{p}{p-1}}(\L). $$
  In fact, one can prove that (see \cite[Lemma 4.1.13]{LR15})
 $$   V^*= \Delta (L^{\frac{p}{p-1}}(\L)).   $$
\end{rem}

\begin{thm}\label{Th3}
Suppose that $\Psi$ satisfies $(\Psi 1)$-$(\Psi4)$. Then Theorem \ref{Th2} applies with $A:= \Delta \Psi (\cdot)$ and $\alpha := p$.
\end{thm}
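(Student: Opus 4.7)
The plan is to verify the abstract hypotheses (H1)--(H4) of Theorem \ref{Th2} for the operator $A(u) := -\Delta\Psi(u)$ (the sign being chosen so that (\ref{SPME}) fits the abstract form $\partial_t^\beta (X-x) + A X = \partial_t^\gamma \int_0^t B\, dW$ of (\ref{SPDE})) with $\alpha := p$, and then invoke Theorem \ref{Th2} directly. The whole exercise amounts to translating each abstract axiom into the corresponding structural property of the nonlinearity $\Psi$.

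First I would fix the functional-analytic framework. Since $p \ge 2$ and $\L$ is bounded, we have continuous dense embeddings $L^p(\L) \hookrightarrow L^2(\L) \hookrightarrow (H_0^{1,2}(\L))^*$, so the Gelfand triple $V \subseteq H \subseteq V^*$ is well-defined. Using the identification $V^* = \Delta(L^{p/(p-1)}(\L))$ recalled in the paper (see \cite[Lemma 4.1.13]{LR15}), $(\Psi 4)$ yields $\Psi(u) \in L^{p/(p-1)}(\L)$ whenever $u\in V$, so $A(u)=-\Delta\Psi(u) \in V^*$ and the duality pairing becomes the ordinary integral
$${}_{V^*}\langle A(u),v\rangle_V = \int_\L \Psi(u)\,v\, dx, \qquad u,v \in V.$$

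With this pairing in hand, each (H$i$) reduces directly to the corresponding ($\Psi i$). Hemicontinuity (H1) follows from the continuity of $\Psi$ in $(\Psi 1)$ together with Lebesgue's dominated convergence theorem, the dominating function being provided by the polynomial growth in $(\Psi 4)$. Monotonicity (H2) is exactly $(\Psi 2)$ integrated over $\L$. Coercivity (H3) follows from $(\Psi 3)$:
$${}_{V^*}\langle A(v),v\rangle_V = \int_\L \Psi(v)\,v\, dx \ge c_1 \|v\|_V^p - c_2|\L|,$$
so we may take $\delta := c_1$ and choose $g \in L^1([0,\infty);\R_+)$ dominating the constant $c_2|\L|$ on the relevant time interval. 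Growth (H4) follows from $(\Psi 4)$ via $\|A(u)\|_{V^*} \le C\|\Psi(u)\|_{L^{p/(p-1)}} \le c_3\|u\|_V^{p-1} + c_4|\L|^{(p-1)/p}$, which fits the form $g(t)^{(p-1)/p} + C\|u\|_V^{p-1}$.

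The only genuinely subtle point is the identification $V^* = \Delta(L^{p/(p-1)}(\L))$ and the corresponding computation of the $V^*$--$V$ duality as an $L^{p/(p-1)}$--$L^p$ integral in this non-standard Gelfand triple (where $H \ne L^2(\L)$), which is what legitimises writing $\langle -\Delta\Psi(u),v\rangle_{V^*\times V} = \int_\L \Psi(u)v\, dx$. Once that identification is in place, axioms (H1)--(H4) are essentially direct reformulations of $(\Psi 1)$--$(\Psi 4)$, and the conclusion then follows immediately from Theorem \ref{Th2}.
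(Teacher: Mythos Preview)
Your proposal is correct and follows essentially the same approach as the paper: set up the Gelfand triple $V=L^p(\L)\subseteq H=(H_0^{1,2}(\L))^*\subseteq V^*$, use the identification $V^*=\Delta(L^{p/(p-1)}(\L))$ from \cite[Lemma 4.1.13]{LR15} to write the duality pairing as an integral, and then read off (H1)--(H4) directly from $(\Psi1)$--$(\Psi4)$ via dominated convergence, pointwise monotonicity, and the obvious estimates. The paper does exactly this (with the sign convention that (H1)--(H4) are checked for $-A$ rather than defining $A:=-\Delta\Psi$ as you do), and your identification of the $V^*$ representation as the only nontrivial step matches the paper's reliance on \cite[Lemma 4.1.13]{LR15}.
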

\begin{proof}
Now we can define the \emph{porous medium operator}
$A:  V \to V^*  $ by
\begin{align}\label{eq:gelfand 19b}
A(u):= \D \Psi (u) ,\quad u\in L^p(\L).
\end{align}
Note that by \cite[Lemma 4.1.13]{LR15} the operator $A$
is well-defined. The conclusion follows directly from Theorem \ref{Th2} by checking (H1)--(H4) hold for $-A$.

In fact, it is well-known that the porous medium operator satisfies the monotonicity and coercivity properties (see e.g.~\cite[Section 4.1]{LR15}).
We include the proof here for the reader's convenience.
\begin{enumerate}
\item [(H1):] Let $u,v, x\in V= L^p(\L)$ and $\l \in \R$. Then by
\begin{align}\label{eq:gelfand 20}
\begin{split}
   { }_{V^*}\<A(u+\l v), x\>_{V} = &  { }_{V^*}\<  \D \Psi (u+ \l v),x \>_{V}\\
   =&- \int_{\L} \Psi (u(\xi) + \l v(\xi) ) x(\xi) \d \xi.
\end{split}
\end{align}
By $(\Psi 4)$ for $|\l|\leq 1$ the integrand in the right-hand side of \eqref{eq:gelfand 20} is bounded by
$$
   [c_4 + c_3 2^{p-2} (|u|^{p-1} + |v| ^{p-1})]|x|
$$
which by H\"older's inequality is in $L^1 (\L)$. So, (H1) follows by ($\Psi1$) and
Lebesgue's dominated convergence theorem.
\item [(H2):] Let $u,v\in V=L^p (\L)$. Then
\begin{align*}
    { }_{V^*}\< A(u) -A(v) , u-v)\>_{V}
   & =  { }_{V^*}\<\D (\Psi (u) -\Psi(v)), u-v\>_{V}\\
   & = -\int_{\L} [\Psi(u(\xi))-\Psi(v(\xi))](u(\xi) - v(\xi)) \d \xi\\
   & \leq 0,
\end{align*}
where we used $(\Psi 2)$ in the last step.
\item [(H3):] Let $v\in L^p(\L)= V$. Then by $(\Psi 3)$
\begin{align*}
    { }_{V^*}\< A(v),v\>_{V} & = - \int_{\L} \Psi(v(\xi)) v(\xi) \d\xi\\
   &\leq \int_{\L}(-c_1|v(\xi)|^p + c_2 ) \d\xi.
\end{align*}
Hence (H3) is satisfied with $\delta:= c_1,\, \alpha= p$ and $g(t)= c_2 |\L|$.
\item [(H4):] Let $v\in L^p (\L)= V$. Then by \cite[Lemma 4.1.13]{LR15}
and $(\Psi 4)$
   \begin{align*}
      \|A(v)\|_{V^*}
      & = \norm{\D \Psi(v)} _{V^*}\\
      & = \norm{\Psi (v) } _{L^{\frac{p}{p-1}}}\\
      & \leq c_4 |\L|^{\frac{p-1}{p}} + c_3
         \left( \int|v(\xi)|^p \d\xi\right)^{\frac{p-1}{p}}\\
      & = c_4 |\L |^{\frac{p-1}{p}} + c_3 \norm{v}_V^{p-1},
   \end{align*}
   so (H4) holds with $\alpha = p$.
\end{enumerate}
\end{proof}

\begin{rem}\label{4.1.15}

 $(i)$  For $p\in [2,\infty[$ and $\Psi (s) := s|s|^{p-2}$ we have
   $$A(v) = \D (v|v|^{p-2}), \, v \in L^p(\L),$$
    which is the non-linear
   operator appearing in the classical porous medium equation (cf. e.g.\cite{Va06,Va07}), $i.e.$
   $$ \frac{\partial u(t)}{\partial t}= \D (u(t)|u(t)|^{p-2}) , \quad u(0,\cdot )=u_0, $$
   whose solution describes the time evolution of the density $u(t)$ of a substance
   in a porous medium. And as a matter of fact, the time-fractional porous medium equation was first introduced by Caputo in \cite{C99}
   to describe the diffusion of fluids in
   porous media with memory.

 $(ii)$ As mentioned before, our results apply to the situation in \cite{RRW07} where $\Delta\Psi(u)$ is replaced by $L\Psi(u)$, here $L$ is the generator
 of a transient Dirichlet form on $L^2(E,\mathcal{E}, \mu)$ for abstract $\sigma$-finite measure spaces $(E,\mathcal{E}, \mu)$,
 so (\ref{SPDE}) takes the form
 $$  \partial_t^\beta (X(t)-x)- L \Psi (X(t)) = \partial_t^\gamma \int_0^t B(s) \d W(s).   $$
 This, in particular, includes ``fractal''($i.e.$ non-local) operators $L$, as $e.g.$
  fractional Laplacian $-(-\Delta)^\alpha$, $\alpha\in(0,1]\cap (0, \frac{d}{2})$, and the underlying domain $\Lambda$ may be unbounded, e.g.~$\Lambda=\R^d$, $d\geq3$.
 $\Psi$ may belong to the more general class described in \cite{RRW07}, but must be monotone.
 In particular, $\Psi(r) := r |r|^{m-1}$ with $m \in (0,1]$ is covered, $i.e.$ the fast diffusion equation.
So, equations as
$$  \partial_t^\beta (X(t)-x) +(-\Delta)^\alpha\left(X(t)|X(t)|^{m-1} \right)  = \partial_t^\gamma \int_0^t B(s) \d W(s)  $$
 for $m\in (0, \infty)$ are covered.
   \end{rem}

\subsection{Time-fractional $p$-Laplace equation}

   Now we consider stochastic time-fractional  $p$-Laplace equations ($p\ge 2$)
   \begin{equation}\label{SPLE}
    \partial_t^\beta (X(t)-x)= \text{div}  \left( |\nabla X(t)|^{p-2} \nabla X(t)\right) + \partial_t^\gamma \int_0^t B(s)\d W(s),  \  0<t<T.
   \end{equation}
We will choose the follwing Gelfand triple
   $$V:=H_0^{1,p}(\L) \subseteq H:= L^2 (\L) \subseteq V^*= (H_0^{1,p}(\L))^*$$
   and define
   $A:H_0^{1,p}(\L)\to H_0^{1,p}(\L)^*$ by
  \begin{equation}\label{eq:3.4'}
     A(u):= \text{div} (|\nabla u|^{p-2}\nabla u), \; u\in H_0^{1,p}(\L);
   \end{equation}
   more precisely, given $u\in H_0^{1,p}(\L)$, then we define
   \begin{align}\label{eq:gelfand 11}
      { }_{V^*}\< A(u),v\>_{V} := - \int_{\L} |\nabla u(\xi)|^{p-2}
      \< \nabla u(\xi), \nabla v(\xi)\> \d\xi\quad\text{for all } v\in H_0^{1,p}(\L).
   \end{align}
   Here $A$ is called the \emph{p-Laplacian},
   \index{p-Laplacian@$p$-Laplacian}
   also denoted by $\D_p$. \label{symbol32}
   Note that $\D_2=\Delta$.

   It is well-known that $A: V\to V^*$ is well-defined (see e.g.~\cite[Section 4.1]{LR15}). In fact, we only need to show that the right-hand
   side of \eqref{eq:gelfand 11} defines a linear functional in $v\in V$
   which is continuous with respect to $\norm{\;}_V= \norm{\; }_{1,p}$.
   First we recall that  $\nabla u\in L^p (\L;\R^d)$
   for all $u\in H_0^{1,p}(\L)$. Hence by H\"older's inequality
   \begin{align*}
      \int|\nabla u(\xi) |^{p-1} |\nabla v(\xi) | \d\xi&\leq \left(\int |\nabla u (\xi) |^p \d\xi\right)^{\frac{p-1}{p}}\left(\int|\nabla v(\xi) |^p \d\xi\right)^{\frac{1}{p}}\\
      &\leq \norm{u}_{1,p}^{p-1}\norm{v}_{1,p}.
   \end{align*}
   Since this dominates the absolute value of the right-hand side of \eqref{eq:gelfand 11} for all
   $u\in H_0^{1,p}(\L)$,  we have that $A (u)$ is a well-defined element of $(H_0^{1,p}(\L))^*$
   and that
   \begin{align}\label{eq:gelfand 12}
      \norm{A(u)}_{V^*}\leq \norm{u}_V^{p-1}.
   \end{align}

   \begin{thm}\label{Th4}
Suppose that $p\ge 2$. Then Theorem \ref{Th2} applies with $A$ as defined in \eqref{eq:3.4'} and $\alpha := p$.
\end{thm}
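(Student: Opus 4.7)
The plan is to verify that $-A$ (with $A$ as in \eqref{eq:3.4'}) satisfies $(H1)$--$(H4)$ with $\alpha := p$, and then invoke Theorem \ref{Th2} directly; this mirrors the strategy used in the proof of Theorem \ref{Th3}. Note that \eqref{eq:gelfand 12} already shows the well-definedness of $A \colon V \to V^*$ and encodes $(H4)$ with $g \equiv 0$ and $C = 1$, so that step is essentially free.

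For hemicontinuity $(H1)$, I would fix $v_1, v_2, v \in V$ and note that for $|s| \le 1$ the integrand
$$|\nabla(v_1 + s v_2)(\xi)|^{p-2} \langle \nabla(v_1 + s v_2)(\xi), \nabla v(\xi) \rangle$$
is dominated pointwise by a constant multiple of $(|\nabla v_1(\xi)|^{p-1} + |\nabla v_2(\xi)|^{p-1}) |\nabla v(\xi)|$, which is in $L^1(\L)$ by Hölder's inequality. Continuity in $s$ then follows by Lebesgue dominated convergence.

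The key analytic ingredient is $(H2)$, for which I would invoke the classical pointwise vector inequality
$$(|a|^{p-2} a - |b|^{p-2} b) \cdot (a - b) \ge 0, \quad a, b \in \R^d, \ p \ge 2,$$
which is standard in the theory of the $p$-Laplacian. Integrating this over $\L$ with $a = \nabla u(\xi)$, $b = \nabla v(\xi)$ yields
$${}_{V^*}\langle -A(u) + A(v), u - v \rangle_V = \int_\L (|\nabla u|^{p-2} \nabla u - |\nabla v|^{p-2} \nabla v) \cdot (\nabla u - \nabla v) \, d\xi \ge 0,$$
which is exactly monotonicity of $-A$.

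For coercivity $(H3)$, a direct computation gives ${}_{V^*}\langle -A(v), v \rangle_V = \int_\L |\nabla v(\xi)|^p \, d\xi = \|\nabla v\|_{L^p(\L)}^p$. Since $\L$ is bounded and we impose Dirichlet boundary conditions, the Poincaré inequality shows that $\|\nabla \cdot\|_{L^p(\L)}$ is an equivalent norm on $V = H_0^{1,p}(\L)$, so there exists $\delta > 0$ with
$${}_{V^*}\langle -A(v), v \rangle_V \ge \delta \|v\|_V^p,$$
giving $(H3)$ with $g \equiv 0$. Once $(H1)$--$(H4)$ are in hand, Theorem \ref{Th2} applies verbatim (the hypothesis $F \in V$ is built into the assumptions of Theorem \ref{Th2} and handled by the Radonifying hypothesis on $B$), yielding existence, uniqueness and the stated regularity. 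I do not expect any genuine obstacle here: the only non-mechanical ingredient is the elementary vector monotonicity inequality recalled above.
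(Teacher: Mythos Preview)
Your proposal is correct and follows essentially the same route as the paper: verify $(H1)$--$(H4)$ for $-A$ (hemicontinuity via dominated convergence, coercivity via Poincar\'e, growth via \eqref{eq:gelfand 12}) and then invoke Theorem~\ref{Th2}. The only cosmetic difference is in $(H2)$: you cite the pointwise vector inequality $(|a|^{p-2}a - |b|^{p-2}b)\cdot(a-b)\ge 0$ as a known fact, whereas the paper derives it inline by expanding the inner product, applying Cauchy--Schwarz, and factoring as $(|\nabla u|^{p-1}-|\nabla v|^{p-1})(|\nabla u|-|\nabla v|)\ge 0$.
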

\begin{proof}
   The conclusion follows directly from Theorem \ref{Th2} by checking (H1)-(H4) hold for $-A$ here.
   Now we include the proof here also for the reader's convenience.
   \begin{enumerate}
      \item[(H1):] Let $u,v,x\in H_0^{1,p}(\L)$, then by \eqref{eq:gelfand 11} we
      have to show for $\l \in \R, \, |\l |\leq 1$
      \begin{equation}
      \begin{split}
         &\lim_{\l \to 0} \int_{\L} \Big(
         |\nabla (u+ \l v) (\xi) |^{p-2}\< \nabla (u+\l v) (\xi) ,
          \nabla x(\xi)\> \\
    &\quad
          -|\nabla u (\xi)|^{p-2} \< \nabla u(\xi) , \nabla x (\xi)\>
          \Big)
          \d \xi=0.
      \end{split}
      \end{equation}
      Since obviously the integrands converge to zero as $\l \to 0 \, \d\xi$-a.e., we
      only have to find a dominating function to apply Lebesgue's dominated convergence
      theorem. But obviously, since $|\l| \leq 1$
      \begin{align*}
         &|\nabla (u+\l v) (\xi) |^{p-2}| \<\nabla (u+\l v) (\xi) , \nabla x(\xi)\>|\\
         \leq &2^{p-2} \left(|\nabla u(\xi) |^{p-1}+ |\nabla v(\xi)|^{p-1}\right)
         |\nabla x(\xi)|
      \end{align*}
      and the right-hand side is in $L^1(\L)$ by H\"older's inequality as we have seen above.
   \item [(H2):] Let $u,v\in H_0^{1,p}(\L)$. Then by \eqref{eq:gelfand 11}
      \begin{align*}
         &-{ }_{V^*}\<A (u)-A(v), u-v\>_{V}\\
   =& \int_{\L} \<|\nabla u(\xi) |^{p-2} \nabla
         u(\xi) - |\nabla v(\xi) | ^{p-2} \nabla v(\xi) , \nabla u(\xi) - \nabla v(\xi) |>
         \d\xi\\
         =& \int_{\L} (|\nabla u(\xi) |^p + |\nabla v(\xi) |^p - |\nabla u(\xi)|^{p-2}
         \<\nabla u(\xi) , \nabla v(\xi)\>\\
         &\quad - |\nabla v(\xi)|^{p-2} \<\nabla u(\xi) , \nabla v(\xi) \> )\d\xi\\
         \geq& \int_{\L} (|\nabla u(\xi) |^p + |\nabla v(\xi) |^p - |\nabla u(\xi)|^{p-1}
         |\nabla v(\xi)| \\
         &\quad- |\nabla v(\xi)| ^{p-1} |\nabla u(\xi) | )\d\xi\\
         =& \int_{\L} (|\nabla u(\xi) |^{p-1} - |\nabla v(\xi) |^{p-1})
         (|\nabla u(\xi)| - |\nabla v(\xi)|)\d\xi\\
         \geq \,& 0,
      \end{align*}
      since the map $\R_+\ni s\mapsto s^{p-1}$ is increasing. Hence (H2) is proved.
   \item [(H3):] Because $\L$ is bounded by Poincar\'e's inequality
      there exists a constant $c=c(p,d,|\L|)\in ]0,\infty[$ such that
      \begin{align}\label{eq:gelfand 12b}
         \int_{\L}|\nabla u(\xi) |^p \d\xi \geq c \int_{\L} |u(\xi) |^p \d\xi
         \quad\text{for all } u\in H_0^{1,p}(\L).
      \end{align}
Hence by \eqref{eq:gelfand 11} for all $u\in H_0^{1,p}(\L)$
      $$
         { }_{V^*}\<A(u),u\>_{V} = -\int_{\L} |\nabla u(\xi)|^p \d\xi \leq -\frac{\min(1,c)}{ 2}
         \norm{u} _{1,p}^p.
    $$
      So, (H3) holds with $\alpha = p$.
      \item [(H4):] This condition holds for $A$ by \eqref{eq:gelfand 12} with $\alpha=p$.
\end{enumerate}
\end{proof}

\begin{rem}\label{rem:gelfand 1}
$(i)$ It is easy to show that the above result also holds for the more general case where $\Delta_p$ is replaced by
$$ A(u):= \text{div}  \left( \Psi(|\nabla X(t)|) \nabla X(t)\right),   $$
where $\Psi$ satisfies $(\Psi1)$-$(\Psi4)$ with some $p>1$. Moreover, one can further generalize to more general quasilinear differential operator
$$ A(u):=\sum_{|i|\le m}  (-1)^{|i|} D_i A_i(t,x,Du(t,x)), \ \text{where} \   Du=(D_j u)_{|j|\le m}.    $$
Under certain assumptions the above operator also satisfies (H1)-(H4) (cf.~e.g.~\cite[Proposition 30.10]{Z90}).

$(ii)$ In the case of the $p$-Laplacian or more general operators,  it is possible to add some monomials up to order $p$ as pertubation. For example,
   since $H_0^{1,p}(\L) \subset L^p(\L)$ is continuous and
   dense, so
   $$
      A(u):= \text{div} (|\nabla u|^{p-2} \nabla u) - u|u|^{p-2} ,\, u\in H_0^{1,p}(\L),
   $$
    still satisfies (H1)-(H4)  with respect to the Gelfand triple
   $$
      H_0^{1,p}(\L) \subset L^2 (\L) \subset (H_0^{1,p}(\L))^*.
   $$
\end{rem}

 \section*{Acknowledgements}
 The second named author would like to thank his hosts at Madeira University for a very pleasant stay in summer 2017 where
 part of this work was done. Some helpful comments and suggestions from the referee are also gratefully acknowledged.

\bibliographystyle{amsplain}



\end{document}